\newcommand{\newnotion}[1]{\emph{#1}}
\newcommandx{\gennorsubgrp}[2][2=\empty]{\langle \!\langle#1\ifx#2\empty\else\mid#2\fi \rangle \! \rangle}
\newcommandx{\genidl}[2][2=\empty]{(#1\ifx#2\empty\else\mid#2\fi)}
\DeclareMathOperator{\Aut}{Aut}
\newcommand{\completion}[1]{\widehat{#1}}
\newcommand{\centralizersubgrp}{\mathbf{C}}
\newcommand{\supgrpeq}{\geq}
\DeclareMathOperator{\diam}{diam}
\DeclareMathOperator{\rk}{rk}
\newcommand{\freegrp}{\mathbf{F}}
\newcommand{\trivgrp}{\mathbf{1}}
\newcommand{\lcssubgrp}{\gamma}
\newcommandx{\gensubgrp}[2][2=\empty]{\langle#1\ifx#2\empty\else\mid#2\fi\rangle}
\newcommandx{\lrgensubgrp}[2][2=\empty]{\left\langle#1\ifx#2\empty\else\mid#2\fi\right\rangle}
\newcommand{\subgrpeq}{\leq}
\newcommand{\norsubgrpeq}{\trianglelefteq}
\newcommand{\opnorsubgrpeq}{\trianglelefteq_{\mathrm{o}}}
\newcommand{\con}{\equiv}
\newcommandx{\seq}[2][2=\empty]{{(#1\ifx#2\empty\else:#2\fi)}}
\newcommandx{\lrseq}[2][2=\empty]{{\left(#1\ifx#2\empty\else:#2\fi\right)}}
\newcommandx{\set}[2][2=\empty]{\{#1\ifx#2\empty\else\mid#2\fi\}}
\newcommandx{\lrset}[2][2=\empty]{\left\{#1\ifx#2\empty\else\mid#2\fi\right\}}
\newcommand{\compose}{\circ}
\newcommand{\commutator}[1]{[#1]}
\newcommandx{\grp}[2][2=\empty]{{\langle#1\ifx#2\empty\else\mid#2\fi\rangle}}
\newcommandx{\lrgrp}[2][2=\empty]{{\left\langle#1\ifx#2\empty\else\mid#2\fi\right\rangle}}
\newcommand{\nathom}[1]{\overline{#1}}
\newcommand{\closure}[1]{\overline{#1}}
\renewcommand{\implies}{\Rightarrow} 
\newcommand{\defeq}{\coloneqq}
\newcommand{\eqq}{\con}
\newcommand{\card}[1]{\lvert#1\rvert}
\newcommand{\setjoin}{\cup}
\newcommand{\setmeet}{\cap}
\newcommand{\bigsetmeet}{\bigcap}
\newcommand{\modulus}[1]{\left\lvert #1\right\rvert}
\newcommand{\catFin}{\mathbf{Fin}}
\newcommand{\catAlt}{\mathbf{Alt}}
\newcommand{\catAb}{\mathbf{Ab}}
\newcommand{\catNil}{\mathbf{Nil}}
\newcommand{\catSol}{\mathbf{Sol}}
\newcommand{\catPSL}{\mathbf{PSL}}
\newcommand{\nats}{\mathbb{N}}
\newcommand{\reals}{\mathbb{R}}
\newcommand{\complex}{\mathbb{C}}
\newcommand{\ints}{\mathbb{Z}}
\DeclareMathOperator{\PSL}{PSL}
\DeclareMathOperator{\PGL}{PGL}
\DeclareMathOperator{\SL}{SL}
\DeclareMathOperator{\SO}{SO}
\DeclareMathOperator{\GF}{GF}
\newcommand{\into}{\hookrightarrow}
\newcommand{\calB}{\mathcal{B}}
\newcommand{\calC}{\mathcal{C}}
\newcommand{\calN}{\mathcal{N}}
\newcommand{\calU}{\mathcal{U}}
\newcommand{\rmP}{\mathrm{P}}
\newcommand{\rmc}{\mathrm{c}}
\newcommand{\rmd}{\mathrm{d}}
\theoremstyle{plain}
\newtheorem{lemma}{Lemma}
\newtheorem{theorem}{Theorem}
\newtheorem{definition}{Definition}
\newtheorem{corollary}{Corollary}
\theoremstyle{definition}
\newtheorem{remark}{Remark}
\newtheorem{example}{Example}
\newtheorem{claim}{Claim}
\title{Some remarks on finitarily approximable groups}
\author{Nikolay Nikolov}
\address{N.~Nikolov, University of Oxford, OX2 6GG Oxford, UK}
\email{nikolay.nikolov@maths.ox.ac.uk}
\author{Jakob Schneider}
\address{J.~Schneider, TU Dresden, 01062 Dresden, Germany}
\email{jakob.schneider@tu-dresden.de}
\author{Andreas Thom}
\address{A.~Thom, TU Dresden, 01062 Dresden, Germany}
\email{andreas.thom@tu-dresden.de}
\begin{document}

\begin{abstract}
The concept of a $\calC$-approximable group, 
for a class of finite groups $\calC$, is a common generalization of the concepts of a sofic, weakly sofic, and linear sofic group.

Glebsky raised the question whether all groups are approximable by finite solvable groups with arbitrary invariant length function. We answer this question by showing that any non-trivial finitely generated perfect group does not have this property, generalizing a counterexample of Howie. On a related note, we prove that any non-trivial group which can be approximated by finite groups has a non-trivial quotient that can be approximated by finite special linear groups.

Moreover, we discuss the question which connected Lie groups can be embedded into a metric ultraproduct of finite groups with invariant length function. We prove that these are precisely the abelian ones, providing a negative answer to a question of Doucha. Referring to a problem of Zilber, we show that a the identity component of a Lie group, whose topology is generated by an invariant length function and which is an abstract quotient of a product of finite groups, has to be abelian. Both of these last two facts give an alternative proof of a result of Turing. Finally, we solve a conjecture of Pillay by proving that the identity component of a compactification of a pseudofinite group must be abelian as well.

All results of this article are applications of theorems on generators and commutators in finite groups by the first author and Segal. In Section \ref{sec:Fin_approx_grps_hom_PSL} we also use results of Liebeck and Shalev on bounded generation in finite simple groups.
\end{abstract}
	
\maketitle


\vspace{-0.6cm}
\section{Introduction}

Eversince the work of Gromov on Gottschalk's Surjunctivity Conjecture \cite{gromov1999endomorphisms}, the class of sofic groups has attracted much interest in various areas of mathematics. Major applications of this notion arose in the work Elek and Szab\'o  on Kaplansky's Direct Finiteness Conjecture \cite{elekszabo2004sofic}, L\"uck's Determinant Conjecture \cite{elekszabo2005hyperlinearity}, and more recently in joint work of the third author with Klyachko on generalizations of the Kervaire-Laudenbach Conjecture and Howie's Conjecture \cite{klyachkothom2017new}.

Despite considerable effort, no non-sofic group has been found so far. In view of this situation, attempts have been made to provide variations of the problem that might be more approachable. In the terminology of Holt and Rees, sofic groups are precisely those groups which can be approximated by finite symmetric groups with normalized Hamming length (in the sense of Definition 1.6 from \cite{thom2012metric}). It is natural to vary the class of finite groups and also the metrics that are allowed. Note that our terminology differs from the one used in \cite{ouldhoucinepoint2013alternatives}, where similar concepts were studied.

The strongest form of approximation is satisfied by LEF (resp.~LEA) groups. In this case, it is well known that a finitely presented group is not approximable by finite (resp.~amenable) groups with discrete length function, i.e.~it is not LEF (resp.~LEA), if and only if it fails to be residually finite (resp.~residually amenable). 
Examples of sofic groups which fail to be LEA (and thus also fail to be LEF) are given in \cite{cornulier2011sofic} and \cite{karnikolov2014non} (see also \cite{thom2008examples}), answering a question of Gromov \cite{gromov1999endomorphisms}.

In \cite{thom2012metric} the third author proved that the so-called Higman group cannot be approximated by finite groups with commutator-contractive invariant length function.
In \cite{howie1984the} Howie presented a group which, by a result of Glebsky \cite{glebskyrivera2008sofic}, turned out not to be approximable by finite nilpotent groups with arbitrary invariant length function.

The present article provides four more results of this type (see Sections \ref{sec:Sol_approx_grps} and \ref{sec:approx_Lie_grps}). However, in our setting we restrict only the classes of finite groups and do not impose restrictions on the length functions of the approximating groups other than being invariant (see Definitions \ref{def:C_approx_abs_grp} and \ref{def:C_approx_top_grp}).

Recently, Glebsky \cite{glebsky2016approximations} asked whether all groups can be approximated by finite solvable groups (in the sense of Definition \ref{def:C_approx_abs_grp}). In Section \ref{sec:Sol_approx_grps} we answer this question by establishing that each non-trivial finitely generated perfect group is a counterexample (see Theorem \ref{thm:fin_gen_perf_grp_not_C_approx}). The key to this result is a theorem of Segal \cite{segal2000closed} on generators and commutators in finite solvable groups.

In Section \ref{sec:Fin_approx_grps_hom_PSL}, using results of the first author from \cite{nikolovsegal2012generators} and of Liebeck and Shalev from \cite{liebeckshalev2001diameters}, we prove that any non-trivial group which is approximable by finite groups has a non-trivial homomorphism into a metric ultraproduct of finite simple groups of type $\PSL_n(q)$ with conjugacy length function (see Theorem \ref{thm:smpl_Fin_approx_grp_ultraprod_smpl_grps}). 

In Section \ref{sec:approx_Lie_grps} we discuss the approximability of Lie groups by finite groups. It is easy to see that $\reals$ as a topological group is not approximable by symmetric groups, i.e.~ it is not continuously embeddable into a metric ultraproduct of symmetric groups with arbitrary invariant length function (see Remark \ref{rmk:reals_not_Sym_approx}).
Using a much deeper analysis, we show that a connected Lie group is appoximable by finite groups (in the sense of Definition \ref{def:C_approx_top_grp}) precisely when it is abelian (see Theorem \ref{thm:ctd_Fin_approx_Lie_grps_ab}).
In Question 2.11 of \cite{doucha2016metric} Doucha asked for groups which can be equipped with an invariant length function such that they do not embed into a metric ultraproduct of finite groups with invariant length function. Our result implies that any compact, connected, and non-abelian Lie group is an example of such a group. Thus the simplest example of a topological group which is not weakly sofic, i.e.~not continuously embeddable into a metric ultraproduct of finite groups with invariant length function, is $\SO_3(\reals)$. However, we remark that every linear Lie group is an abstract subgroup of the algebraic ultraproduct of finite groups indexed over $\nats$ (see Remark \ref{rmk:Lie_grp_not_Fin_approx_top_matters}).

Furthermore, in the same section we answer the question of Zilber \cite{zilber2014perfect} if there exist a compact simple Lie group which is not a quotient of an algebraic ultraproduct of finite groups. Indeed, we show that a Lie group which can be equipped with an invariant length function generating its topology and which is an abstract quotient of a product of finite groups has abelian identity component (see Theorem \ref{thm:Lie_grp_quot_prod_fin_grps}). Hence any compact simple Lie group fails to be approximable by finite groups in the sense of Zilber. 

A slight variation of Theorem \ref{thm:Lie_grp_quot_prod_fin_grps} also answers Question 1.2 Pillay \cite{pillay2015remarks}. Moreover, we point out that Theorem \ref{thm:ctd_Fin_approx_Lie_grps_ab} and Theorem \ref{thm:Lie_grp_quot_prod_fin_grps} provide an alternative proof of the main result of Turing \cite{turing1938finite}. 

Finally, using the same approach as for the previous two results, we solve the conjecture of Pillay \cite{pillay2015remarks} that the Bohr compactification of any pseudofinite group is abelian (see Theorem \ref{thm:cpt_pseudo_fin_grp_ab}).

All results of Section \ref{sec:approx_Lie_grps} follow from a theorem on generators and commutators in finite groups of the first author and Segal \cite{nikolovsegal2012generators}. 

\section{Preliminaries}

In this section we recall some basic concepts. We introduce the notion of $\calC$-approximable abstract and topological groups, and present examples.

\subsection{Metric groups and length functions}

By a metric group we mean a group equipped with a metric. However, we allow a metric to attain the value infinity (this is needed for Definition \ref{def:met_ultraprod_fin_grps} to make sense).
For a left-invariant metric $d_G:G\times G\to[0,\infty]$ on the group $G$ we define the corresponding \newnotion{length function} (or \newnotion{norm}) $\ell_G:G\to[0,\infty]$ by $\ell_G(g)\defeq d_G(1_G,g)$. It inherits the following three properties from $d_G$:
\begin{enumerate}[(i)]
	\item $\ell_G(g)=0$ if and only if $g=1_G$;
	\item $\ell_G(g)=\ell_G(g^{-1})$ for $g\in G$;
	\item $\ell_G(gh)\leq\ell_G(g)+\ell_G(h)$ for $g,h\in G$.
\end{enumerate}
Conversely, to any such function $\ell_G$ we associate a left-invariant metric $d_G:G\times G\to [0,\infty]$ defined by $d_G(g,h)\defeq\ell_G(g^{-1}h)$. This gives a one-to-one correspondence between length functions and left-invariant metrics. We indicate that a length function and a left-invariant metric correspond to each other by equipping them with the same decoration.

The length function $\ell_G$ is called \newnotion{invariant} if it is constant on conjugacy classes. This happens precisely when $d_G$ is bi-invariant. In this article all length functions will be invariant (and all metrics are bi-invariant).

Let us introduce the following types of length functions on a finite group $G$, which we shall use in this article:
The \newnotion{discrete length function} $\ell_G^{\rmd}$ is the simplest one. It is defined by $\ell_G^{\rmd}(g)\defeq 1$ for $g\in G\setminus\set{1_G}$ and corresponds to the discrete metric $d_G^{\rmd}$ on $G$.
The \newnotion{conjugacy (pseudo) length function} $\ell_G^{\rmc}$ is defined by
\begin{equation} \label{conjmetric}
\ell_G^{\rmc}(g)\defeq \frac{\log\card{g^G}}{\log\card{G}}
\end{equation}
for $g\in G$, where $g^G$ is the conjugacy class of $g$ in $G$. It is only a proper length function if $G$ has trivial center.
The \newnotion{projective rank length function} $\ell_G^{\rm pr}$ is defined if $G\subgrpeq\PGL_n(q)$ for some $n\in\nats$ and $q$ a prime power; 
$$
\ell_G^{\rm pr}(g)\defeq\frac{1}{n}\min\set{\rk(1-\hat{g})}[\hat{g}\text{ some lift of } g].
$$
Finally, the \newnotion{Cayley length function} $\ell_G^{{\rm Cay},S}$ with respect to some subset $S\subseteq G$ is defined by 
$$
\ell_G^{{\rm Cay},S}(g)\defeq\min\set{n\in\nats}[g=s_1\cdots s_n\text{ for }s_i\in S\setjoin S^{-1}]\setjoin\set{\infty}.
$$

We call a family of length functions $\seq{\ell_i}_{i\in I}$ on a sequence of finite groups $\seq{H_i}_{i\in I}$ Lipschitz continuous with respect to a second family $\seq{\ell_i'}_{i\in I}$ on the same groups if there is $L>0$ such that $\ell_i\leq L\ell_i'$ ($i\in I$). 

For example, since $\ell^{\rm c}_G,\ell^{\rm pr}_G\leq\ell^{\rm d}_G$ for any finite group $G$, the conjugacy length function and the projective rank length function are Lipschitz continuous with respect to the discrete length function (with $L=1$).

If $\seq{\ell_i'}_{i\in I}$ is also Lipschitz continuous with respect to $\seq{\ell_i}_{i\in I}$, we call these families Lipschitz equivalent. 

For example, $\ell^{\rm c}$ and $\ell^{\rm pr}$ are Lipschitz equivalent on the class of non-abelian finite simple groups, which follows from \cite{liebeckshalev2001diameters} (see the argument at the end of Section \ref{sec:Fin_approx_grps_hom_PSL}).

\subsection{On $\calC$-approximable abstract groups}

Now we can define metric approximation of an abstract group by a class of finite groups. Throughout the article let $\calC$ be such a class.

\begin{definition}\label{def:C_approx_abs_grp}
	An abstract group $G$ is called \newnotion{$\calC$-approximable} if there is a function $\delta_{\bullet}:G\setminus\set{1_G}\to(0,\infty]$ such that for any finite subset $S\subseteq G$ and $\varepsilon>0$ there exist a group $H\in\calC$, an invariant length function $\ell_H$ on $H$, and a map $\varphi:S\to H$, such that
	\begin{enumerate}[(i)]
		\item if $1_G\in S$, then $\varphi(1_G)=1_H$;
		\item if $g,h,gh\in S$, then $d_H(\varphi(g)\varphi(h),\varphi(gh))<\varepsilon$;
		\item for $g\in S\setminus\set{1_G}$ we have $\ell_H(\varphi(g))\geq\delta_g$.
	\end{enumerate}
\end{definition}

Note that the above definition differs slightly from Definition 1.6 from the one in \cite{thom2012metric} as we impose no restrictions on the invariant length functions. However, it is equivalent to Definition 6 from \cite{glebsky2016approximations}.

Indeed, we may even require that $\ell_H\leq 1$ and $\delta_\bullet\eqq 1$ in the above definition without changing its essence. Namely, choosing $\varepsilon>0$ small enough, setting $c\defeq\min_{g\in S}{\delta_g}$, $\ell_H'\defeq\min\set{\ell_H/c,1}$, and defining $\delta':G\setminus\set{1_G}\to(0,\infty]$ by $\delta_\bullet'\defeq 1$, we can replace $\delta$ by $\delta'$ and $\ell_H$ by $\ell_H'$. So, in the sense of \cite[p.~3]{holtrees2016some}, if we do not impose restrictions on the length functions on the groups from $\calC$, the terms \enquote*{\newnotion{$\calC$-approximation property}}, \enquote*{\newnotion{discrete $\calC$-approximation property}}, and \enquote*{\newnotion{strong $\calC$-approximation property}} coincide.

Moreover, similar to soficity, being $\calC$-approximable is a local property. This is expressed in the following remark.

\begin{remark}\label{rmk:C_approx_iff_fin_gen_subgrp_C_approx}
	An abstract group is $\calC$-approximable if and only if every finitely generated subgroup has this property.
\end{remark}

Let us now present some examples of $\calC$-approximable abstract groups. Subsequently, denote by $\catAlt$ (resp.~$\catFin$) the class of finite alternating groups (resp.~the class of all finite groups). Indeed, $\calC$-approximable abstract groups (in the above sense) can be seen as a generalization of sofic (resp.~weakly sofic) groups as it is shown in Section 2 of \cite{glebsky2016approximations}:

\begin{example}
	A group is sofic (resp.~weakly sofic) if and only if it is $\catAlt$-approximable (resp.~$\catFin$-approximable) as an abstract group.
\end{example}

Groups approximable by certain classes of finite simple groups of Lie type have been studied in \cite{arzhantsevapaunescu2017linear} and \cite{thomwilson2014metric, thomwilson2016some}.

Every $\calC$-group $G$ is certainly $\calC$-approximable, since we can take $H\defeq G$, $\varphi$ to be the restriction of the identity on $G$ to $S$, and $\ell_H\defeq\ell_H^{\rmd}$ to be the discrete length function on $H$ in Definition \ref{def:C_approx_abs_grp}. Hence Remark \ref{rmk:C_approx_iff_fin_gen_subgrp_C_approx} implies:

\begin{example}\label{exl:loc_C_impl_C_approx}
	Every locally $\calC$-group is $\calC$-approximable as an abstract group.
\end{example}

%
%

\subsection{Metric ultraproducts of groups}

Since there is another common equivalent characterization of $\calC$-approximable groups via \newnotion{metric ultraproducts} of $\calC$-groups with invariant length function, we recall this concept next (for more details on the algebraic and geometric structure of such ultraproducts see also \cite{stolzthom2014lattice} and \cite{thomwilson2014metric, thomwilson2016some}).

\begin{definition}\label{def:met_ultraprod_fin_grps}
	Let $\seq{H_i,\ell_i}_{i\in I}$ be a sequence of finite groups with invariant length function and be $\calU$ an ultrafilter on the index set $I$. The metric ultraproduct $(\nathom{H},\nathom{\ell})\defeq\prod_\calU{(H_i,\ell_i)}$ is defined as the group $H\defeq\prod_{i\in I}{H_i}$ modulo the normal subgroup $N_\calU\defeq\set{\seq{h_i}\in H}[\lim_\calU{\ell_i(h_i)}=0]$ of null sequences equipped with the (invariant) length function $\nathom{\ell}$ defined by $\nathom{\ell}(\nathom{h})\defeq\lim_\calU{\ell_i(h_i)}$, where $h=\seq{h_i}$ is a representative.
\end{definition}

Here it is important that length functions are $[0,\infty]$-valued, since otherwise $\nathom{\ell}$ would not be well-defined. Some authors use a slightly different definition by restricting $H$ to the sequences $\seq{h_i}\in\prod_{i\in I}{H_i}$ of uniformly bounded length, i.e.~$\sup_{i\in I}{\ell_i(h_i)}<\infty$. However, we prefer the above definition since then the ultraproduct is always a quotient of a product of the finite groups $H_i$ ($i\in I$).

Another thing we mention here is that an ultraproduct as in Definition \ref{def:met_ultraprod_fin_grps} is always a topological group, i.e.~the group operation and taking inverses are continuous with respect to the topology induced by $\nathom{\ell}$. This holds because $\nathom{\ell}$ is invariant.

Lastly, we remark that the algebraic ultraproduct of a family of finite groups is isomorphic to the metric ultraproduct of these groups equipped with the discrete length function with respect to the same ultrafilter. In this sense we can view every algebraic ultraproduct as a metric ultraproduct.

Now we can point out the announced characterization of $\calC$-approximable abstract groups via metric ultraproducts. Let us call the class $\calC$ trivial if either $\calC=\emptyset$ or $\calC=\set{\trivgrp}$. Here is the promised characterization. 

\begin{lemma}\label{lem:char_C_approx_abs_grp_via_ultraprod}
	If $\calC$ is a non-trivial class, every abstract $\calC$-approximable group $G$ is isomorphic to a discrete subgroup of a metric ultraproduct $(\nathom{H},\nathom{\ell})=\prod_\calU{(H_i,\ell_i)}$ of $\calC$-groups $H_i$ with invariant length function $\ell_i$ ($i\in I$) such that $\diam(H_i,\ell_i)=1$ and the distance between the images of any two different elements of $G$ is one. 
	If $G$ is countable, $I$ can be chosen to be $\nats$ with the natural order and $\calU$ to be some non-principal ultrafilter.
	
	Conversely, any subgroup of a metric ultraproduct of $\calC$-groups with invariant length function is $\calC$-approximable as an abstract group.
\end{lemma}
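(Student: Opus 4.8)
The plan is to prove the two directions separately, exploiting the reduction noted right after Definition \ref{def:C_approx_abs_grp}, namely that we may assume $\ell_H\leq 1$ and $\delta_\bullet\con 1$ without changing the notion of $\calC$-approximability. This normalization is exactly what will let me force $\diam(H_i,\ell_i)=1$ and the separation of distinct elements by distance one.

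\medskip

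\emph{Forward direction.} Suppose $G$ is $\calC$-approximable. I would first enumerate the relevant finite approximation data. For the index set $I$, take the set of pairs $(S,n)$ where $S\subseteq G$ is finite and $n\in\nats$, playing the role of $\varepsilon=1/n$. For each $i=(S,n)$, Definition \ref{def:C_approx_abs_grp} (in the normalized form, so $\ell_H\leq 1$ and $\delta_g=1$) furnishes a group $H_i\in\calC$, an invariant length function $\ell_i$, and a map $\varphi_i:S\to H_i$ satisfying the three conditions, with $\varepsilon=1/n$. By rescaling each $\ell_i$ (dividing by its diameter, which is positive since $\calC$ is non-trivial and approximation of a non-trivial $S$ is required — or simply truncating/normalizing) I arrange $\diam(H_i,\ell_i)=1$; the conditions (i)--(iii) are preserved up to harmless rescaling of $\varepsilon$. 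Now equip $I$ with the partial order where $(S,n)\leq(S',n')$ iff $S\subseteq S'$ and $n\leq n'$, and choose an ultrafilter $\calU$ on $I$ containing, for every fixed $(S_0,n_0)$, the up-set $\set{i}[i\geq(S_0,n_0)]$; such a $\calU$ exists because these up-sets have the finite intersection property. Define $\Phi:G\to\nathom{H}=\prod_\calU(H_i,\ell_i)$ by sending $g$ to the class of the sequence $\seq{h_i}$ where $h_i\defeq\varphi_i(g)$ whenever $g\in S$ (for $i=(S,n)$) and $h_i\defeq 1_{H_i}$ otherwise. The multiplicativity condition (ii) together with the cofinal choice of $\calU$ shows $\nathom{\ell}(\Phi(g)\Phi(h)\Phi(gh)^{-1})=\lim_\calU\ell_i(\cdots)=0$, so $\Phi$ is a homomorphism; condition (iii) with $\delta_g=1$ gives $\nathom{\ell}(\Phi(g))=\lim_\calU\ell_i(\varphi_i(g))\geq 1$ for $g\neq 1_G$, hence $\Phi$ is injective and, by invariance and bi-invariance of $\nathom{\ell}$, any two distinct images are at distance exactly one (distance $\geq 1$ from below and $\leq\diam=1$ from above). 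This makes $G$ a discrete subgroup. When $G$ is countable, the finite subsets can be arranged in an increasing chain $S_1\subseteq S_2\subseteq\cdots$, so $I$ collapses to $\nats$ with its natural order and $\calU$ to any non-principal ultrafilter on $\nats$.

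\medskip

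\emph{Converse direction.} Suppose $G\subgrpeq\nathom{H}=\prod_\calU(H_i,\ell_i)$. Given a finite set $S\subseteq G$ and $\varepsilon>0$, pick for each $g\in S$ a representative sequence $\seq{g_i}_{i\in I}$ in $\prod_{i\in I}H_i$. For each element $g$ set $\delta_g\defeq\nathom{\ell}(g)>0$ (positive since $g\neq 1_G$ means $g\notin N_\calU$). Because $\nathom{\ell}$ is a genuine limit along $\calU$ and $S$ is finite, the set of indices $i$ on which simultaneously (a) $\ell_i(g_i h_i (gh)_i^{-1})<\varepsilon$ for all $g,h,gh\in S$, and (b) $\ell_i(g_i)\geq\delta_g-\varepsilon/2$ for all $g\in S\setminus\set{1_G}$, and (c) $(1_G)_i=1_{H_i}$, lies in $\calU$ and hence is non-empty. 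Choosing any single such index $i$ and setting $H\defeq H_i$, $\ell_H\defeq\ell_i$, $\varphi\defeq(g\mapsto g_i)$ verifies Definition \ref{def:C_approx_abs_grp} with the length bound $\delta_\bullet$ (after a uniform small adjustment of $\varepsilon$). This establishes $\calC$-approximability.

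\medskip

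The main obstacle I anticipate is the forward direction's diameter normalization combined with the simultaneous lower bound: one must be careful that rescaling $\ell_i$ to make $\diam=1$ does not destroy condition (iii), and that the ultralimit genuinely realizes the separation distance as exactly one rather than merely bounded below. The cleanest route is to invoke the normalization remark after Definition \ref{def:C_approx_abs_grp} first (replacing $\ell_H$ by $\min\set{\ell_H/c,1}$ there), so that every approximating length function already satisfies $\ell_i\leq 1$ and the lower bound $\delta_g=1$; then $\diam(H_i,\ell_i)\leq 1$, and by discarding the at most harmless indices where the diameter is strictly less than $1$ (or by noting the image separation forces $\diam\geq 1$ on a $\calU$-large set once $G$ is non-trivial), the equality $\diam=1$ and the distance-one separation follow together.
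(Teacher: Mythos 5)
The paper gives no written proof here (it appeals to the ``well-known'' sofic-case argument), and your proposal is exactly that standard argument. Your forward direction is essentially correct: indexing by pairs $(S,1/n)$, choosing $\calU$ to contain all up-sets of the directed poset, and invoking the normalization remark after Definition \ref{def:C_approx_abs_grp} (so that $\ell_i\leq 1$ and $\delta_\bullet\equiv 1$) is the right way to get both $\diam(H_i,\ell_i)=1$ and separation \emph{exactly} one; note that your first suggestion (dividing $\ell_i$ by its diameter) would not achieve this by itself, but your final paragraph correctly identifies the clean route, and the indices where $S\subseteq\{1_G\}$ can simply be replaced by a fixed non-trivial $\calC$-group (which exists since $\calC$ is non-trivial) carrying the discrete length function, without affecting anything $\calU$-almost everywhere. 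The countable case via an exhausting chain $S_1\subseteq S_2\subseteq\cdots$ and an arbitrary non-principal ultrafilter is also fine.

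The converse, however, contains a genuine slip in the choice of $\delta_\bullet$. You set $\delta_g\defeq\nathom{\ell}(g)$, but at any single index you only obtain $\ell_i(g_i)\geq\delta_g-\varepsilon/2$, whereas condition (iii) of Definition \ref{def:C_approx_abs_grp} demands $\ell_H(\varphi(g))\geq\delta_g$ on the nose; no ``uniform small adjustment of $\varepsilon$'' can repair this, because $\delta_\bullet$ is quantified \emph{before} $S$ and $\varepsilon$ and condition (iii) does not involve $\varepsilon$ at all. There is also an edge case you miss: length functions are $[0,\infty]$-valued, so $\nathom{\ell}(g)=\infty$ is possible, and then $\lim_\calU\ell_i(g_i)=\infty$ does \emph{not} put the set of indices with $\ell_i(g_i)=\infty$ (which is what your condition (b) would require) into $\calU$ --- that set may even be empty. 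Both problems disappear with the standard fix: define $\delta_g\defeq\min\{\nathom{\ell}(g),1\}/2$. Then $\delta_g$ is finite, positive, depends only on $g$, and satisfies $\nathom{\ell}(g)>\delta_g$ strictly, so the set of indices $i$ with $\ell_i(g_i)>\delta_g$ lies in $\calU$; intersecting with your conditions (a) and (c) and picking any index in the intersection verifies Definition \ref{def:C_approx_abs_grp} verbatim. With this one-line correction your proof is complete and coincides with the argument the paper has in mind.
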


The proof of this result is identical to the corresponding proof in the sofic case, which is well known. Hence we omit it here.

\subsection{On $\calC$-approximable topological groups}

In view of Lemma \ref{lem:char_C_approx_abs_grp_via_ultraprod} it is natural to generalize the notion of a $\calC$-approximable group to topological groups using ultraproducts:

\begin{definition}\label{def:C_approx_top_grp}
	A topological group is called \newnotion{$\calC$-approximable} if it embeds continuously into a metric ultraproduct of $\calC$-groups with invariant length functions.
\end{definition}

Lemma \ref{lem:char_C_approx_abs_grp_via_ultraprod} indicates the following class of examples of $\calC$-approximable topological groups:

\begin{example}
	Every $\calC$-approximable abstract group equipped with the discrete topology is $\calC$-approximable as a topological group. 
\end{example}

Conversely, a $\calC$-approximable topological group is $\calC$-approximable as an abstract group when we \enquote*{forget} its topology.

To present more classes of examples, we need an auxiliary result. The following lemma gives a sufficient condition for a metric group to be isomorphic to an ultraproduct of finite metric groups. Its proof is trivial.

\begin{lemma}\label{lem:met_grp_iso_ultprod_fin_met_grps}
	Let $(G,\ell_G)$ be a group with invariant length function, $I$ an index set, $\calU$ an ultrafilter on $I$, $\seq{K_i,\ell_i}_{i\in I}$ a sequence of finite groups with invariant length function, and $(\nathom{K},\nathom{\ell})\defeq\prod_\calU{(K_i,\ell_i)}$ its metric ultraproduct.
	\begin{enumerate}[(i)]
		\item Assume there are mappings $\varphi_i:G\to K_i$, which are isometric and a homomorphism in the $\calU$-limit, i.e.~
		$$
		\lim_\calU{d_i(\varphi_i(g),\varphi_i(h))}=d_G(g,h) \text{ and } \lim_\calU{d_i(\varphi_i(g)\varphi_i(h),\varphi_i(gh))}=0.
		$$ 
		Then there is an isometric embedding $\nathom{\varphi}:(G,\ell_G)\into(\nathom{K},\nathom{\ell})$ in the ultraproduct defined by $\nathom{\varphi}(g)\defeq\nathom{(\varphi_i(g))}$.
		\item The embedding $\nathom{\varphi}$ is surjective if and only if for every $\seq{k_i}\in K\defeq\prod_{i\in I}{K_i}$ there exists $g\in G$ such that $\lim_\calU{d_i(\varphi_i(g),k_i)}=0$. 
		\item It surjects onto the subgroup of elements of finite length of $(\nathom{K},\nathom{\ell})$ if the previous assertion holds for all $\seq{k_i}\in K$ with $\sup_{i\in I}{\ell_i(k_i)}<\infty$.
	\end{enumerate}
\end{lemma}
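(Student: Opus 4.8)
The plan is to prove Lemma \ref{lem:met_grp_iso_ultprod_fin_met_grps} by unwinding the definition of the metric ultraproduct (Definition \ref{def:met_ultraprod_fin_grps}) and verifying each of the three assertions directly; this is precisely the kind of ``trivial'' bookkeeping the authors anticipate.

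For part (i), I would first check that $\nathom{\varphi}$ is well-defined as a map into the quotient $\nathom{K}=K/N_\calU$: given $g\in G$, the element $\nathom{(\varphi_i(g))}$ is the class of the sequence $\seq{\varphi_i(g)}\in K$ modulo the null-sequence subgroup $N_\calU$, so no choice is involved. Next I would verify it is isometric, which is immediate from the hypothesis, since by Definition \ref{def:met_ultraprod_fin_grps} one has $\nathom{d}(\nathom{\varphi}(g),\nathom{\varphi}(h))=\lim_\calU d_i(\varphi_i(g),\varphi_i(h))=d_G(g,h)$. In particular $\nathom{\varphi}$ is injective because $d_G(g,h)=0$ forces $g=h$ by property (i) of a length function. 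Finally I would check that $\nathom{\varphi}$ is a homomorphism: the second $\calU$-limit hypothesis says exactly that the sequence $\seq{\varphi_i(g)\varphi_i(h)\varphi_i(gh)^{-1}}$ lies in $N_\calU$, so $\nathom{\varphi}(g)\nathom{\varphi}(h)=\nathom{\varphi}(gh)$ in the quotient. Combining these, $\nathom{\varphi}$ is an isometric group embedding $(G,\ell_G)\into(\nathom{K},\nathom{\ell})$.

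For part (ii), surjectivity of $\nathom{\varphi}$ means every element of $\nathom{K}$, i.e.~every class $\nathom{(k_i)}$ for $\seq{k_i}\in K$, equals $\nathom{\varphi}(g)$ for some $g\in G$. By the isometry relation this is equivalent to $\nathom{d}(\nathom{\varphi}(g),\nathom{(k_i)})=\lim_\calU d_i(\varphi_i(g),k_i)=0$, which is the stated condition. Part (iii) is the same argument restricted to the subgroup of elements of finite $\nathom{\ell}$-length: an element $\nathom{(k_i)}$ has finite length precisely when $\lim_\calU\ell_i(k_i)<\infty$, and by standard ultrafilter arguments one may replace such a representative by one satisfying $\sup_{i\in I}\ell_i(k_i)<\infty$ without changing its class, so the condition need only be imposed on uniformly bounded sequences.

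I do not expect a serious obstacle here; the only points requiring a little care are confirming that $N_\calU$ is genuinely the kernel through which $\nathom{\varphi}$ factors (which uses invariance of $\nathom{\ell}$ to see $N_\calU$ is normal, already noted after Definition \ref{def:met_ultraprod_fin_grps}) and, in part (iii), justifying the passage from a finite-length representative to a uniformly bounded one via a truncation of the index set inside $\calU$. Everything else is a direct translation of the hypotheses through the definition of the metric ultraproduct.
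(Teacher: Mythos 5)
Your proof is correct and is exactly the direct verification the paper has in mind: the authors omit the argument entirely, declaring it trivial, and your unwinding of Definition \ref{def:met_ultraprod_fin_grps} (well-definedness, isometry, the homomorphism property via the null-sequence subgroup $N_\calU$, and the truncation of a finite-length representative to a uniformly bounded one along a set in $\calU$ for part (iii)) supplies precisely the intended bookkeeping.
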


Let $\calC^\rmP$ and $\calC^{\mathrm{SP}}$ be the class of finite products of $\calC$-groups and the class subgroups of finite products of $\calC$-groups, respectively. Now, we investigate which profinite groups are $\calC$-approximable as topological groups.  The standard example is the given by the following lemma:

\begin{lemma}\label{lem:ctbl_prod_C_grps_C^P_approx}
	Let $H_i$ ($i\in\nats_{>0}$) be $\calC$-groups. Then the profinite group $P\defeq\prod_{i\in\nats}{H_i}$ is isomorphic to a metric ultraproduct of $\calC^\rmP$-groups and so $\calC^\rmP$-approximable as a topological group.
\end{lemma}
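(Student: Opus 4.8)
The plan is to realize $P$ directly as a metric ultraproduct by applying Lemma \ref{lem:met_grp_iso_ultprod_fin_met_grps} to the finite partial products of the $H_i$. Fix a non-principal ultrafilter $\calU$ on $\nats_{>0}$ and set $K_n\defeq\prod_{i=1}^n H_i$, a finite product of $\calC$-groups and hence a $\calC^\rmP$-group. Equip $K_n$ with the invariant length function $\ell_n(k)\defeq\sum_{i=1}^n 2^{-i}\ell_{H_i}^{\rmd}(k_i)$ and $P$ with $\ell_P(g)\defeq\sum_{i=1}^\infty 2^{-i}\ell_{H_i}^{\rmd}(g_i)$, where $\ell_{H_i}^{\rmd}$ is the discrete length function. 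Both are invariant, being coordinatewise sums of invariant functions, and a routine check shows that $\ell_P$ generates the product topology on $P$ (the tail $\sum_{i>N}2^{-i}\le 2^{-N}$ controls all but the first $N$ coordinates, each of which is discrete). Note $\diam(K_n,\ell_n)=1-2^{-n}<1$.

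Next I would take $\varphi_n\colon P\to K_n$ to be the projection onto the first $n$ coordinates, which is an honest group homomorphism, so the homomorphism condition in Lemma \ref{lem:met_grp_iso_ultprod_fin_met_grps}(i) holds exactly. Since $\ell_n(\varphi_n(g))=\sum_{i=1}^n 2^{-i}\ell_{H_i}^{\rmd}(g_i)\to\ell_P(g)$ as $n\to\infty$, and a non-principal ultralimit of a convergent sequence agrees with its ordinary limit, we get $\lim_\calU d_n(\varphi_n(g),\varphi_n(h))=\lim_\calU\ell_n(\varphi_n(g^{-1}h))=\ell_P(g^{-1}h)=d_P(g,h)$. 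Thus part (i) yields an isometric embedding $\nathom\varphi\colon(P,\ell_P)\into(\nathom K,\nathom\ell)=\prod_\calU(K_n,\ell_n)$, a metric ultraproduct of $\calC^\rmP$-groups.

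The substantive step is surjectivity, which I would establish via Lemma \ref{lem:met_grp_iso_ultprod_fin_met_grps}(ii). Given an arbitrary sequence $\seq{k_n}\in\prod_n K_n$, write $k_n=(k_{n,1},\dots,k_{n,n})$. For each fixed coordinate $i$ the values $k_{n,i}$ (for $n\geq i$) lie in the finite group $H_i$, so $g_i\defeq\lim_\calU k_{n,i}$ is well defined, being the value attained on a $\calU$-large set, and $g\defeq\seq{g_i}$ lies in $P$. Then $d_n(\varphi_n(g),k_n)=\sum_{i=1}^n 2^{-i}\ell_{H_i}^{\rmd}(g_i^{-1}k_{n,i})$, and splitting off the tail at any $N$ bounds this by $2^{-N}$ on the $\calU$-large set where $k_{n,i}=g_i$ for all $i\le N$. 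Hence $\lim_\calU d_n(\varphi_n(g),k_n)\le 2^{-N}$ for every $N$, so the limit is $0$ and $\nathom\varphi$ is onto.

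Being an isometric surjective homomorphism between groups whose topologies are generated by the respective invariant length functions, $\nathom\varphi$ is then an isomorphism of topological groups, so $P\iso\prod_\calU(K_n,\ell_n)$ and $P$ is $\calC^\rmP$-approximable by Definition \ref{def:C_approx_top_grp}. I expect the main obstacle to be precisely the surjectivity argument: one must produce a single element of $P$ hitting a coordinatewise-varying sequence $\seq{k_n}$, and the resolution is the diagonal coordinatewise ultralimit together with the geometric decay of the weights $2^{-i}$, which is where the finiteness of each $H_i$ enters essentially.
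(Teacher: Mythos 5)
Your proposal is correct and follows essentially the same route as the paper: both realize $P$ as a metric ultraproduct of the partial products $K_n=H_1\times\cdots\times H_n$ along a non-principal ultrafilter via parts (i) and (ii) of Lemma \ref{lem:met_grp_iso_ultprod_fin_met_grps}, with surjectivity coming from the $\calU$-limit of the sequence $\seq{k_n}$ in the compact group $P$ (your coordinatewise ultralimit is exactly this limit). The only differences are cosmetic: the paper uses the length function $\ell_P(h)=\max\set{1/i}[h_i\neq 1_{H_i}]\setjoin\set{0}$ and nearest-point maps $\varphi_n$, where you use the weighted sum $\sum_i 2^{-i}\ell^{\rmd}_{H_i}$ and the actual projection homomorphisms, which makes the homomorphism condition exact rather than approximate.
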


\begin{proof}
	We want to apply (i) and (ii) of Lemma \ref{lem:met_grp_iso_ultprod_fin_met_grps} to $G\defeq P$. Equip $G$ with the invariant length function $\ell_G(h)\defeq\max\set{1/i}[i\in\nats_{>0},\,h_i\neq 1_{H_i}]\setjoin\set{0}$, where $h=\seq{h_i}$.
	Let $I\defeq\nats_{>0}$ and $\calU$ be some non-principal ultrafilter on $\nats$. Set $K_i\defeq H_1\times\cdots\times H_i\subgrpeq G$ and let $\ell_i$ be the restriction of $\ell_G$ to $K_i$. Define $\varphi_i:G\to K_i$ in such a way that for every $g\in G$ the distance $d_G(\varphi_i(g),g)$ is minimal. By definition of $\ell_G$ we have that $d_G(\varphi_i(g),g)<1/i$. Hence it is easy to verify that condition (i) of Lemma \ref{lem:met_grp_iso_ultprod_fin_met_grps} is fulfilled. For (ii) define $g$ as the $\calU$-limit of $\seq{k_i}\in K$ (which exists by compactness). Then $\lim_\calU{d_i(\varphi_i(g),k_i)}\leq\lim_\calU{d_G(\varphi_i(g),g)}+\lim_\calU{d_G(g,k_i)}=0$. This ends the proof.
\end{proof}

From the previous example we derive the following result.

\begin{lemma}\label{lem:char_approx_profin_grps}
	 For a pro-$\calC^{\mathrm{SP}}$ group $P$ the following are equivalent:
	\begin{enumerate}[(i)]
		\item $P$ is $\calC^\rmP$-approximable as a topological group.
		\item $P$ is metrizable.
		\item $P$ is first-countable.
		\item $P$ is the inverse limit of a countable inverse system of $\calC^{\mathrm{SP}}$-groups with all maps surjective.
		\item $P$ is a closed topological subgroup of a countable product of $\calC$-groups.
	\end{enumerate}
\end{lemma}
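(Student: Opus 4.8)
The plan is to prove the cycle of implications (i)$\implies$(ii)$\implies$(iii)$\implies$(iv)$\implies$(v)$\implies$(i), which keeps each step economical; in particular the deep topological input (Birkhoff--Kakutani) is avoided, since the only direction of the metrizable/first-countable equivalence actually invoked is the trivial one. Throughout I use that a pro-$\calC^{\mathrm{SP}}$ group is profinite, hence compact and Hausdorff, and that $\calC^{\mathrm{SP}}$ is closed under subgroups and finite products.

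For (i)$\implies$(ii) I would exploit compactness. By Definition \ref{def:C_approx_top_grp}, $P$ admits a continuous injective homomorphism into a metric ultraproduct $(\nathom{H},\nathom{\ell})$, which is Hausdorff since $\nathom{\ell}$ descends to a genuine (extended-valued) metric after quotienting out null sequences. A continuous injection from the compact space $P$ into a Hausdorff space is automatically a homeomorphism onto its image, so $P$ is homeomorphic to a subspace of $(\nathom{H},\nathom{\ell})$; replacing $\nathom{\ell}$ by $\min\set{\nathom{\ell},1}$ yields a bounded metric inducing the same topology, and subspaces of metrizable spaces are metrizable, giving (ii). The implication (ii)$\implies$(iii) is immediate, as metric spaces are first-countable.

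The heart of the argument is (iii)$\implies$(iv), and this is where I expect the main obstacle to lie. Writing $P=\varprojlim_{i\in I}Q_i$ with $Q_i\in\calC^{\mathrm{SP}}$ and surjective projections $p_i\colon P\to Q_i$, the kernels $K_i\defeq\ker p_i$ together with their finite intersections form a neighbourhood base of $1_P$ by open normal subgroups. The crucial point is that each such quotient lies in $\calC^{\mathrm{SP}}$: for finite $F\subseteq I$ one has $P/\bigsetmeet_{i\in F}K_i\into\prod_{i\in F}Q_i$, which is in $\calC^{\mathrm{SP}}$ precisely because $\calC^{\mathrm{SP}}$ is closed under finite products and subgroups. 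Now first-countability supplies a countable neighbourhood base $\seq{U_k}_k$; choosing for each $k$ a base member $V_k\subseteq U_k$ and setting $M_k\defeq V_1\setmeet\cdots\setmeet V_k$ produces a descending chain of open normal subgroups with $\bigsetmeet_k M_k=1$, with each $P/M_k\in\calC^{\mathrm{SP}}$, and still forming a neighbourhood base. Hence $P\iso\varprojlim_k P/M_k$ with surjective transition maps, which is the desired countable inverse system. I emphasise that the difficulty is not the chain itself but keeping every $P/M_k$ inside $\calC^{\mathrm{SP}}$, which is exactly the role of the closure properties of $\calC^{\mathrm{SP}}$.

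Finally, for (iv)$\implies$(v) I would realise $P=\varprojlim_k G_k$ as a closed subgroup of $\prod_k G_k$; writing each $G_k\subgrpeq\prod_{j=1}^{n_k}H_{k,j}$ with $H_{k,j}\in\calC$ embeds $\prod_k G_k$, and therefore $P$, as a closed subgroup of the countable product $\prod_{k,j}H_{k,j}$ of $\calC$-groups (each finite $G_k$ is closed in the finite product $\prod_j H_{k,j}$, and the inverse limit is closed in $\prod_k G_k$). The closing implication (v)$\implies$(i) is then a direct appeal to Lemma \ref{lem:ctbl_prod_C_grps_C^P_approx}: that lemma makes $\prod_{k,j}H_{k,j}$ a $\calC^\rmP$-approximable topological group, so composing the continuous inclusion $P\into\prod_{k,j}H_{k,j}$ with the continuous embedding of the latter into a metric ultraproduct of $\calC^\rmP$-groups exhibits $P$ as $\calC^\rmP$-approximable, completing the cycle.
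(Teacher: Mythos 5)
Your proof is correct and follows essentially the same route as the paper: the identical cycle (i)$\implies$(ii)$\implies$(iii)$\implies$(iv)$\implies$(v)$\implies$(i), with the key step (iii)$\implies$(iv) resting on the same idea (open normal subgroups with $\calC^{\mathrm{SP}}$-quotients inside a countable neighbourhood base, using closure of $\calC^{\mathrm{SP}}$ under subgroups and finite products) and the closing implication reduced to Lemma \ref{lem:ctbl_prod_C_grps_C^P_approx} exactly as in the paper. The only differences are cosmetic: you spell out the compactness argument for (i)$\implies$(ii), which the paper dismisses as trivial, and you build the descending chain $M_k$ directly from kernels of the inverse-limit projections where the paper instead cites Propositions 0.3.3(a), 1.2.1, and 1.2.2 of \cite{wilson1998profinite}.
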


\begin{proof}
	The implications (i)$\implies$(ii)$\implies$(iii) are trivial. (iii)$\implies$(iv): Let $\calB$ be a countable system of open neighborhoods at $1_G$. For each $B\in\calB$ we can find an open normal subgroup $N\subseteq B$ such that $P/N$ is a subgroup of a $\calC^{\mathrm{SP}}$-group, so itself a $\calC^{\mathrm{SP}}$-group (by Proposition 0.3.3(a) and Proposition 1.2.1 of \cite{wilson1998profinite}). Let $\calN$ be the collection of these subgroups. Since $\bigsetmeet\calB=\set{1_P}$, as $P$ is Hausdorff, the same holds for $\calN$. Moreover, for $M,N\in\calN$ we have $P/(M\setmeet N)\subgrpeq P/M\times P/N$, so $P/(M\setmeet N)$ is a $\calC^{\mathrm{SP}}$-group too. 
	Hence we may assume that $\calN$ is closed for finite intersections and apply Proposition 1.2.2 of \cite{wilson1998profinite} to obtain that $P$ is the inverse limit of the $\calC^{\mathrm{SP}}$-groups $P/N$ ($N\in\calN$) with respect to the natural maps $P/M\to P/N$ for $M\subgrpeq N$ ($M,N\in\calN$).
	
	(iv)$\implies$(v): By the standard construction of the inverse limit, it embeds into a countable product of $\calC^{\mathrm{SP}}$-groups, which (by definition) embeds into a countable product of $\calC$-groups. For 
	(v)$\implies$(i) we only need to show that a countable product of $\calC$-groups is $\calC^\rmP$-approximable. This is Lemma \ref{lem:ctbl_prod_C_grps_C^P_approx}. The proof is complete.
\end{proof}

\begin{remark}
	The previous lemma implies that if a pro-$\calC^{\mathrm{SP}}$ group embeds continuously into a  metric ultraproduct of $\calC^\rmP$-groups with invariant length function, then it already embeds into such an ultraproduct of countably many groups.
\end{remark}

We are now able to present the following important example: 

\begin{example}
		If $P=\closure{\gensubgrp{x_1,\ldots,x_r}}$ is a topologically finitely generated pro-$\calC^{\mathrm{SP}}$ group, then $P$ is $\calC^\rmP$-approximable as a topological group. 
\end{example}

\begin{proof}
	Indeed, $P$ embeds continuously into the product of all its continuous finite quotients $\prod_N{P/N}$ and finite generation implies that there are only countably many of these. By Proposition 1.2.1 of \cite{wilson1998profinite} we can restrict this map to a product of subgroups of $\calC^{\mathrm{SP}}$-groups (which are itself $\calC^{\mathrm{SP}}$-groups) such that it still is an embedding. But, the latter embeds into a countable product of $\calC$-groups. Hence $P$ is $\calC^\rmP$-approximable (by Lemma \ref{lem:char_approx_profin_grps}).
\end{proof}

However, it is also simple to find examples of profinite groups that are not approximable by finite groups:

\begin{example}
	Uncountable products of (non-trivial) finite groups are not metrizable and hence not approximable by finite groups.
\end{example}

Now we turn to Lie groups. The following example demonstrates that connected abelian Lie groups can always be approximated by finite abelian groups in the sense of Definition \ref{def:C_approx_top_grp}.
Henceforth, let $\catAb_d$ be the class of finite abelian groups which are a direct sum of at most $d$ cyclic groups.

\begin{lemma}\label{lem:ctd_ab_Lie_grp_Ab_approx}
	Every $d$-dimenional connected abelian Lie group $L=\reals^m\times(\reals/\ints)^n$ ($m+n=d$) equipped with the \enquote*{euclidean} length function $\ell_L$ is isometrically isomorphic to the subgroup of elements of finite length of an ultraproduct of $\catAb_d$-groups with length function and hence $\catAb_d$-approximable.
\end{lemma}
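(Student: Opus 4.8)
The plan is to apply Lemma \ref{lem:met_grp_iso_ultprod_fin_met_grps}, approximating each of the $d=m+n$ one-dimensional factors of $L$ by a single finite cyclic group and combining these coordinatewise. Fix a sequence $\seq{n_i}_{i\in\nats_{>0}}$ with $n_i\to\infty$, let $I\defeq\nats_{>0}$ and let $\calU$ be a non-principal ultrafilter on $I$. For each $i$ set
$$
K_i\defeq(\ints/n_i^2\ints)^m\times(\ints/n_i\ints)^n,
$$
a direct sum of exactly $m+n=d$ cyclic groups, so $K_i\in\catAb_d$. On each torus factor I identify $\ints/n_i\ints$ with the subgroup $\tfrac{1}{n_i}\ints/\ints\subgrpeq\reals/\ints$ carrying the restriction of the circle length function, and define the coordinate map by rounding $y\in\reals/\ints$ to the nearest multiple of $1/n_i$. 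On each real factor I represent $\ints/n_i^2\ints$ by residues of smallest absolute value, set $\ell(a)\defeq|a|/n_i$, and define the coordinate map by sending $x\in\reals$ to the class of the integer nearest to $n_ix$. I then let $\ell_i$ on $K_i$ be the Euclidean combination of these coordinate length functions (matching the Euclidean $\ell_L$) and let $\varphi_i\colon L\to K_i$ be the product of the coordinate maps.

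The one genuinely non-routine point is checking that $a\mapsto|a|/n_i$ is an invariant length function on $\ints/n_i^2\ints$, i.e.\ that the triangle inequality survives the wrap-around. Working with symmetric representatives $a,b$: if $a+b$ stays in range the inequality is an equality, while if it wraps then necessarily $|a+b|\geq n_i^2/2$ with $a,b$ of the same sign, whence the reduced representative has absolute value $n_i^2-|a+b|\leq|a|+|b|$, so the inequality still holds. The Euclidean combination of length functions is again a length function by Minkowski's inequality, and invariance is automatic in the abelian setting, so each $\ell_i$ is a legitimate invariant length function on $K_i$.

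For fixed $g,h\in L$ and all large $i$ no wrap-around occurs and all rounding errors are $O(1/n_i)$, so $\varphi_i$ is isometric and asymptotically multiplicative in the $\calU$-limit; part (i) of Lemma \ref{lem:met_grp_iso_ultprod_fin_met_grps} then yields an isometric embedding $\nathom{\varphi}\colon(L,\ell_L)\into(\nathom{K},\nathom{\ell})$. To identify the image with the finite-length subgroup I invoke part (iii): given $\seq{k_i}\in\prod_{i\in I}{K_i}$ with $\sup_{i\in I}\ell_i(k_i)<\infty$, each torus coordinate lies in the compact circle, and the length bound confines each real coordinate $a/n_i$ to a fixed compact interval $[-C,C]$; taking coordinatewise $\calU$-limits produces $g\in L$ with $\lim_\calU d_i(\varphi_i(g),k_i)=0$, where the absence of wrap-around for such bounded elements makes the relevant differences genuine. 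Hence $\nathom{\varphi}$ surjects onto the subgroup of finite length, proving the lemma. I expect the length-function verification in the wrap-around case, together with this final compactness argument pinning down the finite-length elements, to be the main points requiring care; everything else is a coordinatewise bookkeeping of rounding errors.
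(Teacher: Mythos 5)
Your proof is correct and takes essentially the same approach as the paper: both apply parts (i) and (iii) of Lemma \ref{lem:met_grp_iso_ultprod_fin_met_grps} to coordinatewise rounding maps into products of $d$ cyclic groups equipped with scaled circle-type lengths (the paper uses $K_i=(\ints/\genidl{4i^2})^m\times(\ints/\genidl{i})^n$ with the length transported from a finite grid $S_i\subseteq L$, you use $(\ints/n_i^2\ints)^m\times(\ints/n_i\ints)^n$ with the explicit symmetric-representative length), and both conclude via the same compactness argument for the finite-length elements. Your direct verification of the triangle inequality under wrap-around is a point the paper glosses over with the phrase \enquote*{the unique length function that turns $\gamma_i$ into an isometry}, so if anything your write-up is slightly more careful on that step.
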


\begin{proof}
	We wish to apply (i) and (iii) of Lemma \ref{lem:met_grp_iso_ultprod_fin_met_grps} to $G\defeq L$ with euclidean length function $\ell_G\defeq\ell_L$. Let $I\defeq\nats$ and $\calU$ be some non-principal ultrafilter on $\nats$. For $i\in\nats_{>0}$ set 
	$$
	S_i\defeq\lrset{\frac{-i^2}{i},\frac{-i^2+1}{i},\ldots,\frac{i^2}{i}}^m\times\left(\frac{1}{i}\ints/\ints\right)^n,
	$$ 
	and $K_i\defeq (\ints/\genidl{4i^2})^m\times(\ints/\genidl{i})^n$. Define $\alpha_i:\set{-i^2/i,(-i^2+1)/i,\ldots,i^2/i}^m\to(\ints/\genidl{4i^2})^m$ by $x\mapsto \nathom{ix}$, let $\beta_i:(\frac{1}{i}\ints/\ints)^n\to(\ints/\genidl{i})^n$ be the canonical isomorphism, and set  $\gamma_i:S_i\to K_i$ to be the map $(x,y)\mapsto(\alpha_i(x),\beta_i(y))$. Moreover, equip $K_i$ with the unique length function that turns $\gamma_i$ into an isometry. Let $\delta_i:G\to S_i$ be a map such that $d_G(\delta_i(g),g)$ is minimal for all $g\in G$. Now define $\varphi_i\defeq\gamma_i\compose\delta_i$. Clearly,  $d_G(\varphi_i(g),g)$ tends to zero for all $g\in G$. Hence condition (i) of Lemma \ref{lem:met_grp_iso_ultprod_fin_met_grps} holds. Condition (iii) follows from compactness of closed balls of finite radius in $G$ by the same argument as at the end of the proof of Lemma \ref{lem:ctbl_prod_C_grps_C^P_approx}. The proof is complete.
\end{proof}

We will see in Theorem \ref{thm:ctd_Fin_approx_Lie_grps_ab} of Section \ref{sec:approx_Lie_grps} that connected abelian Lie groups are the only $\catFin$-approximable  connected Lie groups.

\section{On $\catSol$-approximable groups}\label{sec:Sol_approx_grps}

Subsequently, let $\catSol$ (resp.~$\catNil$) be the class of finite solvable (resp.~nilpotent) groups. In this section we establish the following theorem. 

\begin{theorem}\label{thm:fin_gen_perf_grp_not_C_approx}
	Any non-trivial finitely generated and perfect group is not $\catSol$-approximable.
\end{theorem}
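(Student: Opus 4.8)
The plan is to argue by contradiction, extracting from a would-be $\catSol$-approximation of a non-trivial finitely generated perfect group $G$ an honest homomorphism from $G$ onto a non-trivial solvable group, which is absurd because $G=\commutator{G,G}$ has no non-trivial solvable quotient. First I would invoke Lemma~\ref{lem:char_C_approx_abs_grp_via_ultraprod} to realize $G$ as a discrete subgroup of a metric ultraproduct $(\nathom{H},\nathom{\ell})=\prod_\calU(H_i,\ell_i)$ of finite solvable groups, normalized so that distinct elements of $G$ lie at distance $1$; in particular $\nathom{\ell}(g)=1$ for every non-trivial $g\in G$. Fixing generators $x_1,\dots,x_d$ of $G$ and representatives $x_{j,i}\in H_i$, the subgroups $K_i\defeq\gensubgrp{x_{1,i},\dots,x_{d,i}}\subgrpeq H_i$ are $d$-generated finite solvable groups, and $G$ embeds into the ultraproduct of the $K_i$.

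The heart of the matter, and where perfectness enters, is the following observation. For each fixed $n$, the composite of the embedding $G\into\prod_\calU K_i$ with the $\calU$-product of the reduction maps $K_i\to K_i/K_i^{(n)}$ modulo the $n$-th terms $K_i^{(n)}$ of the derived series is an honest group homomorphism into $\prod_\calU(K_i/K_i^{(n)})$, a metric ultraproduct of groups of derived length at most $n$, hence itself solvable. Since $G$ is perfect, its image is perfect and solvable, and therefore trivial. Thus for every fixed $n$ each generator $x_{j,i}$ is $\calU$-close to $K_i^{(n)}$, i.e.\ the $\calU$-limit of the distances (in $\ell_i$) from $x_{j,i}$ to $K_i^{(n)}$ vanishes; equivalently, modulo null sequences the images of the $x_{j,i}$ lie in the $\calU$-product of the $K_i^{(n)}$ for all $n$.

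This is not yet a contradiction. Because the derived lengths of the $K_i$ are unbounded along $\calU$, for each fixed $n$ the coset distance is small only on a $\calU$-large set that keeps shrinking as $n$ grows, so the length of $x_{1,i}$ is free to \enquote*{escape to infinity} down the derived series. The decisive step, and what I expect to be the main obstacle, is to make this closeness uniform in $n$, so that one may take $n$ beyond the derived length of $K_i$ for $\calU$-almost all $i$ and conclude $\nathom{\ell}(x_1)=0$, contradicting $\nathom{\ell}(x_1)=1$. This is precisely where Segal's theorem on generators and commutators in finite solvable groups \cite{segal2000closed} is indispensable: it bounds, uniformly in terms of $d$ alone, the number of commutators required to witness membership in the derived series of a $d$-generated finite solvable group, so that the combinatorial complexity of the descent—and hence the accumulated error in the ultraproduct limit—does not grow with the depth $n$. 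Verifying this uniform error control is the technical crux; the ultraproduct reduction and the perfectness input above are the routine ingredients.
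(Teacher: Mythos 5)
Your reduction to a metric ultraproduct via Lemma \ref{lem:char_C_approx_abs_grp_via_ultraprod}, the passage to the $d$-generated solvable subgroups $K_i$, and the observation that perfectness forces the image of $G$ in each $\prod_\calU (K_i/K_i^{(n)})$ to be trivial are all correct, and they amount to an ultraproduct rephrasing of the framework the paper uses (there via the pro-$\catSol$ topology on the free group). The gap is that your ``decisive step'' is not a technical verification you have postponed; it is the entire theorem, and the route you indicate for it cannot work as described. First, you misquote the key input: Theorem \ref{thm:com_gen_id_sol_grp} does not bound ``the number of commutators required to witness membership in the derived series''; it concerns the nilpotent residual $\lcssubgrp_\omega$, asserting that if a $d$-generated finite solvable group satisfies $G=\lcssubgrp_\omega(G)\gensubgrp{g_1,\ldots,g_m}$, then $\lcssubgrp_\omega(G)=\prod_{j=1}^{m'}\commutator{\lcssubgrp_\omega(G),g_{i_j}}$ with $m'$ and the indices depending only on $(d,m)$. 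Second, and more importantly, such bounded-generation statements yield length estimates only when the elements $g_j$ are themselves short, because $\commutator{a,g}=(g^{-1})^{a}g$ has length at most $2\ell(g)$ by invariance; a width bound for commutators of \emph{arbitrary} elements carries no metric information at all. Your scheme never produces short elements to insert into these commutator sets: knowing the generators are $\calU$-close to $K_i^{(n)}$ for each fixed $n$ gives no control over the witnesses of that closeness, and no induction down the derived series is available, since closeness of the generators to $K_i^{(n)}$ says nothing about elements of $K_i^{(n)}$ being close to $K_i^{(n+1)}$ --- the inductive step is empty.

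What actually closes the argument is a second, stronger use of perfectness, which your proposal never makes. Write $G=\freegrp/N$ with $\freegrp$ free on $x_1,\ldots,x_d$; perfectness means $\freegrp=\freegrp'N$, so $x_j=c_jn_j$ with $c_j\in\freegrp'$ and $n_j\in N$. Under the lifted map $\freegrp\to K_i$ the elements $n_j$ go to elements $n_{j,i}$ with $\lim_\calU\ell_i(n_{j,i})=0$, and $K_i=K_i'\gensubgrp{n_{1,i},\ldots,n_{d,i}}$: the group $K_i$ is generated modulo its derived subgroup by \emph{null} elements. Lemma \ref{lem:com_gen_id_wrt_lcs} then gives $K_i'=\commutator{K_i,n_{1,i}}\cdots\commutator{K_i,n_{d,i}}\lcssubgrp_\omega(K_i)$, so $K_i$ is generated modulo $\lcssubgrp_\omega(K_i)$ by at most $d(d+1)$ conjugates of the $n_{j,i}^{\pm1}$; feeding exactly these into Theorem \ref{thm:com_gen_id_sol_grp} exhibits $\lcssubgrp_\omega(K_i)$, and hence all of $K_i'$, as a product of at most $C(d)$ elements each of length at most $2\max_j\ell_i(n_{j,i})$. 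Thus every element of $K_i'$ is null; since any $g\in G=G'$ has representative sequences of the form $c_in_i$ with $c_i\in K_i'$ and $\lim_\calU\ell_i(n_i)=0$, we get $\nathom{\ell}(g)=0$ for every $g\in G$, contradicting your normalization $\nathom{\ell}(g)=1$ for $g\neq 1_G$. Note this is a one-shot argument: there is no limit over $n$ and no accumulation of errors; the uniformity in Segal's theorem is used to bound the number of conjugacy classes of \emph{fixed null elements} needed to cover $K_i'$, not to control a descent along the derived series. (The paper phrases the same computation via Theorem \ref{thm:char_C_approx_grps_pro_C}: the image of any $x\in\freegrp\setminus N$ in every finite solvable quotient lies in a product of boundedly many conjugacy classes of images of fixed elements of $N$.)
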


As a consequence, a finite group is $\catSol$-approximable if and only if it is solvable: Indeed, any finite solvable group is $\catSol$-approximable. On the other hand, a non-solvable finite group contains a non-trivial perfect subgroup and hence cannot be $\catSol$-approximable by Remark \ref{rmk:C_approx_iff_fin_gen_subgrp_C_approx} and Theorem \ref{thm:fin_gen_perf_grp_not_C_approx}.

Initially, Howie proved in \cite{howie1984the} that the group $\grp{x,y}[x^{-2}y^{-3},x^{-2}(xy)^5]$ is not $\catNil$-approximable. We mimic his proof for any non-trivial finitely generated perfect group and then extend it by establishing that these groups are not even $\catSol$-approximable using techniques of Segal \cite{segal2000closed,segal2009words}.

In preparation of the proof of Theorem \ref{thm:fin_gen_perf_grp_not_C_approx}, we need an auxiliary result from \cite{glebskyrivera2008sofic}. Recall that the pro-$\calC$ topology on a group $K$ is the initial topology induced by all homomorphisms to $\calC$-groups equipped with the discrete topology. Hence the closure $\closure{S}$ of a subset $S\subseteq K$ in this topology is characterized as follows: An element $k\in K$ lies in this closure if and only if $\varphi(k)\in\varphi(S)$ for all homomorphism $\varphi:K\to H$, where $H$ is a $\calC$-group.

Adapting Theorem 4.3 of \cite{glebskyrivera2008sofic}, one can prove the following theorem relating $\calC$-approximable groups to the pro-$\calC$ topology on a free group of finite rank:

\begin{theorem}\label{thm:char_C_approx_grps_pro_C}
	Let $\freegrp/N$ be a presentation of a group $G$, where $\rk(\freegrp)<\infty$.
	Then, if $G$ is $\calC$-approximable, for each finite sequence $n_1,\ldots,n_k\in N$ it holds that $\closure{n_1^\freegrp\cdots n_k^\freegrp}\subseteq N$ (in the pro-$\calC$ topology on $\freegrp$). The converse holds if $\calC$ is closed with respect to finite products and subgroups.
\end{theorem}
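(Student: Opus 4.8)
The plan is to reduce the statement to the approximation condition in Definition \ref{def:C_approx_abs_grp}, working through the metric ultraproduct characterization of Lemma \ref{lem:char_C_approx_abs_grp_via_ultraprod}. Fix a presentation $G = \freegrp/N$ with $\rk(\freegrp) < \infty$, write $\pi\colon\freegrp\to G$ for the quotient map, and suppose first that $G$ is $\calC$-approximable. Given $n_1,\ldots,n_k\in N$ and an element $w\in\closure{n_1^\freegrp\cdots n_k^\freegrp}$ (closure in the pro-$\calC$ topology on $\freegrp$), I want to show $w\in N$, i.e.\ $\pi(w)=1_G$. The key observation is that membership in the pro-$\calC$ closure means: for \emph{every} homomorphism $\varphi\colon\freegrp\to H$ with $H\in\calC$, the image $\varphi(w)$ lies in $\varphi(n_1^\freegrp\cdots n_k^\freegrp) = (\varphi n_1)^H\cdots(\varphi n_k)^H$, a product of conjugacy classes.

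First I would fix a finite generating set of $\freegrp$ and take $S\subseteq G$ to be a finite set containing the $\pi$-images of all letters occurring in the words $w, n_1,\ldots,n_k$ together with their partial products, so that the approximation data $(H,\ell_H,\varphi)$ of Definition \ref{def:C_approx_abs_grp} can be assembled into an \emph{almost-homomorphism} $\freegrp\to H$; equivalently, using Lemma \ref{lem:char_C_approx_abs_grp_via_ultraprod}, I would embed $G$ into a metric ultraproduct $(\nathom{H},\nathom{\ell}) = \prod_\calU(H_i,\ell_i)$ of $\calC$-groups with $\diam(H_i,\ell_i)=1$. Composing $\pi$ with the embedding gives a genuine homomorphism $\Phi\colon\freegrp\to\nathom{H}$ whose kernel contains $N$, realized by coordinate maps $\varphi_i\colon\freegrp\to H_i$. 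Since each $n_j\in N=\ker\Phi$, we have $\nathom{\ell}(\Phi(n_j))=0$, so $\lim_\calU\ell_i(\varphi_i(n_j))=0$ for every $j$. Now the hypothesis $w\in\closure{n_1^\freegrp\cdots n_k^\freegrp}$, applied to each coordinate homomorphism $\varphi_i$, forces $\varphi_i(w)\in(\varphi_i n_1)^{H_i}\cdots(\varphi_i n_k)^{H_i}$; since $\ell_i$ is invariant and subadditive, this yields $\ell_i(\varphi_i(w))\leq\sum_{j=1}^k\ell_i(\varphi_i(n_j))$, whence $\nathom{\ell}(\Phi(w))=\lim_\calU\ell_i(\varphi_i(w))=0$. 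As $G$ embeds \emph{discretely} (distinct elements at distance one), $\Phi(w)=1$ forces $\pi(w)=1_G$, i.e.\ $w\in N$, as desired.

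For the converse, assume $\calC$ is closed under finite products and subgroups and that $\closure{n_1^\freegrp\cdots n_k^\freegrp}\subseteq N$ holds for all finite sequences in $N$. Here I would verify Definition \ref{def:C_approx_abs_grp} directly. Given a finite $S\subseteq G$ and $\varepsilon>0$, lift $S$ to words in $\freegrp$; the failure of approximation would mean that certain products of conjugates of relators cannot be separated from a non-relator in any $\calC$-quotient, which is exactly ruled out by the closure hypothesis. Concretely, for each non-trivial $g\in S$ choose a word $w_g\in\freegrp\setminus N$ representing it; since $w_g\notin N\supseteq\closure{n_1^\freegrp\cdots n_k^\freegrp}$ for the relators defining the relevant partial products, the pro-$\calC$ topology separates $w_g$ from the appropriate product of relator-conjugacy-classes, producing a $\calC$-group $H$ and homomorphism $\varphi\colon\freegrp\to H$ detecting this; closure under finite products lets me merge finitely many such $\varphi$'s into a single $H\in\calC$ that simultaneously handles all of $S$, and the induced length function gives the required lower bounds $\delta_g$.

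The main obstacle I expect is the bookkeeping in the converse: one must translate the topological separation statement ``$w\notin\closure{\cdots}$'' into a \emph{uniform} choice of a single $\calC$-group and length function that simultaneously respects multiplication on all of $S$ (condition (ii)) \emph{and} keeps each non-identity element long (condition (iii)). This is where the hypotheses that $\calC$ is closed under finite products and subgroups are essential, since they permit amalgamating the separately-constructed homomorphisms. The forward direction, by contrast, is the more transparent half, as the invariance and subadditivity of $\nathom{\ell}$ do the work almost automatically once the ultraproduct embedding is in place.
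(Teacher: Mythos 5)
Your forward direction is correct, and it is the standard argument (the paper itself only cites Theorem 4.3 of Glebsky--Rivera, whose adaptation proceeds exactly along these lines): lift the ultraproduct embedding from Lemma \ref{lem:char_C_approx_abs_grp_via_ultraprod} to coordinate homomorphisms $\varphi_i\colon\freegrp\to H_i$ using freeness of $\freegrp$, note $\lim_\calU \ell_i(\varphi_i(n_j))=0$ for each relator, and let invariance of $\ell_i$ plus the triangle inequality force $\nathom{\ell}(\Phi(w))=0$, whence $w\in N$ by injectivity. One imprecision is harmless here but matters later: $\varphi_i(n_1^\freegrp\cdots n_k^\freegrp)$ is a product of $\varphi_i(\freegrp)$-conjugacy classes, which is \emph{contained in}, but generally not equal to, $(\varphi_i n_1)^{H_i}\cdots(\varphi_i n_k)^{H_i}$; for this direction containment is all you use.

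The converse, however, has a genuine gap, and it is not mere bookkeeping: the key object of the whole proof, the invariant length function, is never constructed, and the separation statement you invoke is too weak to yield condition (iii) of Definition \ref{def:C_approx_abs_grp}. Concretely, fix a section $\sigma$ of $S$, let $T\subseteq N$ be the finite set of relators $\sigma(gh)^{-1}\sigma(g)\sigma(h)$ ($g,h,gh\in S$), and fix $k>1/\varepsilon$. Separating $w_g$ from a \emph{single} product of conjugacy classes gives no length bound; you must separate $w_g$ from the set $B\defeq\bigcup\bigl\{\,n_1^\freegrp\cdots n_j^\freegrp : j\leq k,\ n_l\in T\cup T^{-1}\,\bigr\}$ of \emph{all} products of at most $k$ conjugates of elements of $T^{\pm1}$. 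Since $B$ is a \emph{finite} union of products of conjugacy classes of elements of $N$, the hypothesis gives $\closure{B}\subseteq N$, so such separating homomorphisms $\psi_g$ exist; this reduction must be made explicit. Then, after merging the $\psi_g$ into one $\psi\colon\freegrp\to H$ via closure under finite products, you must define $\ell_H(h)\defeq\min\bigl\{1,\tfrac{1}{k}\min\{\,j : h=c_1\cdots c_j,\ \text{each } c_l \text{ an } H\text{-conjugate of an element of } \psi(T)^{\pm1}\,\}\bigr\}$ (value $1$ if no such expression exists), check it is an invariant length function (truncation at a constant preserves the triangle inequality), and verify that it makes every relator short, $\ell_H(\psi(t))\leq 1/k<\varepsilon$, giving (ii), and every $\psi(w_g)$ of length $1$, giving (iii) with $\delta\equiv 1$. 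Finally, your attribution of \emph{both} closure hypotheses to "amalgamation" misses the actual role of subgroup closure: the separation only guarantees $\psi(w_g)\notin\psi(B)$, i.e.\ it controls products of $\psi(\freegrp)$-conjugates, while $\ell_H$ is defined via $H$-conjugates; you must therefore replace $H$ by the image $\psi(\freegrp)$ (a subgroup of a product of $\calC$-groups) so that the two notions of conjugacy coincide. Without that step $\psi(w_g)$ could be a short product of $H$-conjugates and condition (iii) would fail.
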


\begin{remark}\label{rmk:abs_res_C_grps_C_approx}
	If $\calC$ is closed with respect to finite products and subgroups, the previous theorem implies that residually $\calC$-groups are $\calC$-approximable as abstract groups, since if $G=\freegrp/N$ is a finitely generated such group, for each finite sequence $n_1,\ldots,n_k\in N$ we obtain $\closure{n_1^\freegrp\cdots n_k^\freegrp}\subseteq \closure{N}=N$ (in the pro-$\calC$ topology on $\freegrp$).
\end{remark}

In view of Theorem \ref{thm:char_C_approx_grps_pro_C}, when $\calC$ is closed for subgroups, to prove the existence of a non-$\calC$-approximable group, it suffices to find a normal subgroup $N\norsubgrpeq\freegrp$ of a free group of finite rank, an element $x\in\freegrp\setminus N$, and a sequence $n_1,\ldots,n_k\in N$ such that $\varphi(x)\in \varphi(n_1)^H\cdots\varphi(n_k)^H$ for any surjective homomorphism $\varphi:\freegrp\to H$ to a $\calC$-group.  

As both classes $\catNil$ and $\catSol$ are closed with respect to subgroups, we shall construct a situation as described before. 

Subsequently, let $\freegrp$ be freely generated by $x_1,\ldots,x_r$. Fix a presentation $\freegrp/N$ of some non-trivial perfect group $P$, and an element $x\in\freegrp\setminus N$. The assumption that $P$ is perfect is equivalent to the fact that $\freegrp=\freegrp'N$. Hence we can find $n_1,\ldots,n_r,n\in N$ such that $x_i\con n_i$ ($i=1,\ldots,r$) and $x\con n$ modulo $\freegrp'$. Consider a surjective homomorphism $\varphi:\freegrp\to H$ to some finite group $H$ (later $H$ will be assumed to be nilpotent resp.~solvable). Writing $y_i\defeq \varphi(x_i)$, $h_i\defeq\varphi(n_i)$ ($i=1,\ldots,r$), $y\defeq\varphi(x)$, and  $h\defeq\varphi(n)$, the above translates to $y_i\con h_i$ ($i=1\ldots,r$) and $y\con h$ modulo $H'$. Clearly, $h_1,\ldots,h_r$ generate $H$ modulo $H'$ (as $\varphi$ is surjective). Now we need a lemma. To state it, it becomes necessary to introduce some notation. In a group $G$ define the commutator of two elements $g,h\in G$ by $\commutator{g,h}\defeq g^{-1}h^{-1}gh=g^{-1}g^h$, for $S\subseteq G$ and $g\in G$ write $\commutator{S,g}$ for the set $\set{\commutator{s,g}}[s\in S]$, and for subgroups $K,L\subgrpeq G$ write $\commutator{K,L}$ for the subgroup generated by $\set{\commutator{k,l}}[k\in K,\, l\in L]$.

\begin{lemma}[Proposition 1.2.5 from \cite{segal2009words}]\label{lem:com_gen_id_wrt_lcs}
	Let $L\norsubgrpeq G$ be groups, and suppose that $G=G'\gensubgrp{g_1,\ldots,g_m}$. Then
	$$
	\commutator{L,G}=\commutator{L,g_1}\cdots\commutator{L,g_m}\commutator{L,_l G}
	$$
	for all $l\geq 1$. Here, $\commutator{L,_l G}$ denotes the subgroup $\commutator{L,\underbrace{G,\ldots,G}_{l}}$ of $G$.
\end{lemma}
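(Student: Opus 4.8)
The plan is to recast the identity as a statement about relative nilpotency and prove that by induction. Throughout I write $D_i\defeq\commutator{L,g_i}$ for the \emph{subset} $\set{\commutator{x,g_i}}[x\in L]$ of $\commutator{L,G}$, and I will use the standard identities $\commutator{ab,c}=\commutator{a,c}^b\commutator{b,c}$ and $\commutator{a,bc}=\commutator{a,c}\commutator{a,b}^c$ together with the three-subgroups-lemma consequence $\commutator{L,G'}\subgrpeq\commutator{L,_2 G}$. Since $L\norsubgrpeq G$, every $\commutator{L,_l G}$ is normal in $G$ and contained in $L$, so I may pass to the quotient $G/\commutator{L,_l G}$, in which the images of $g_1,\ldots,g_m$ still generate modulo the derived subgroup. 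Reading the asserted identity in this quotient shows that it is equivalent to the following statement, which I prove by induction on $c$: if $\commutator{L,_{c+1}G}=\trivgrp$, then $\commutator{L,G}=D_1\cdots D_m$. (The preimage of $D_1\cdots D_m$ is exactly $D_1\cdots D_m\commutator{L,_l G}$, which recovers the displayed formula with $c=l-1$.)

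The case $c=0$ is trivial, and the case $c=1$ is the heart of the argument. Here $\commutator{L,_2 G}=\trivgrp$, so $\commutator{L,G}$ is central, hence abelian, and $\commutator{L,G'}=\trivgrp$. Centrality forces the correction term in $\commutator{xy,g_i}=\commutator{x,g_i}^y\commutator{y,g_i}$ to vanish, so each $x\mapsto\commutator{x,g_i}$ is a homomorphism $L\to\commutator{L,G}$ and each $D_i$ is a genuine subgroup; similarly $g\mapsto\commutator{x,g}$ is a homomorphism $G\to\commutator{L,G}$ that kills $G'$. Writing an arbitrary $g\in G$ as a product of an element of $G'$ and a word in the $g_i$ and using this bilinearity, every generator $\commutator{x,g}$ of the abelian group $\commutator{L,G}$ becomes a product of elements of $D_1,\ldots,D_m$; since these are subgroups of an abelian group their product is their join, giving $\commutator{L,G}=D_1\cdots D_m$.

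For the inductive step let $\commutator{L,_{c+1}G}=\trivgrp$ with $c\geq2$, and set $Z\defeq\commutator{L,_c G}$ and $W\defeq\commutator{L,_{c-1}G}$, so that $Z=\commutator{W,G}$ is central and $\commutator{W,_2 G}=\trivgrp$. Applying the induction hypothesis to $\nathom{L}$ in $G/Z$ (which has relative class $\leq c-1$) and lifting gives $\commutator{L,G}=D_1\cdots D_m\,Z$, while the already-established case $c=1$ applied to $W$ gives $Z=E_1\cdots E_m$ with $E_i\defeq\commutator{W,g_i}\subseteq D_i$; crucially, each element of $E_i$ lies in the central subgroup $Z$. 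The remaining and decisive move is to absorb $Z$ into the product: given $d_1\cdots d_m z$ with $d_i\in D_i$ and $z=e_1\cdots e_m$, $e_i\in E_i$, I use centrality of the $e_i$ to regroup it as $(d_1e_1)\cdots(d_me_m)$, and then the identity $\commutator{x,g_i}\commutator{w,g_i}=\commutator{wx,g_i}$ (valid precisely because $\commutator{w,g_i}$ is central) shows $d_ie_i\in D_i$. Hence $D_1\cdots D_m\,Z=D_1\cdots D_m$ and the induction closes.

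The hard part is exactly this absorption step, and it is what dictates the whole architecture of the proof. A direct induction on $l$ is tempting but fails: substituting the formula into itself replaces $\commutator{L,_l G}$ by another product $D_1\cdots D_m\commutator{L,_{l+1}G}$ and thereby doubles the number of commutator factors, whereas the statement insists on exactly $m$ of them. Keeping the factor count fixed is what forces the two-layer strategy above — lowering the relative class by a central quotient while simultaneously re-expressing the central error term $Z$ as a product over the \emph{same} generators $g_1,\ldots,g_m$ — and the elementary identity $\commutator{x,g}\commutator{w,g}=\commutator{wx,g}$ under centrality of $\commutator{w,g}$ is precisely the lever that recombines the two layers without creating new factors.
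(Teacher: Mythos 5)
Your proof is correct. The paper does not actually contain a proof of this lemma---it is quoted verbatim as Proposition 1.2.5 of Segal's book \cite{segal2009words}---and your argument is essentially the standard one from that source: pass to the quotient by $\commutator{L,_l G}$, settle the centre-by-abelian case via bilinearity of $(x,g)\mapsto\commutator{x,g}$, and close the induction on the relative class by absorbing the central tail through the identity $\commutator{x,g_i}\commutator{w,g_i}=\commutator{wx,g_i}$.
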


\begin{proof}[Proof of Theorem \ref{thm:fin_gen_perf_grp_not_C_approx}, Part 1]
We apply the previous lemma to $G\defeq L\defeq H$, $m\defeq r$, and $g_i\defeq h_i$ ($i=1,\ldots,r$). Moreover, we choose $l\geq 1$ to be an integer such that $\lcssubgrp_l(H)=\lcssubgrp_\omega(H)$ (here $\lcssubgrp_l(H)$ is the $l$th term in the lower central series of $H$ and $\lcssubgrp_\omega(H)=\bigsetmeet\set{\lcssubgrp_l(H)}[l\in\nats_{>0}]$). Hence there exist $l_{ij},l_j\in H$ ($i,j=1\ldots,r$) such that $y_i \con h_i\commutator{l_{i1},h_1}\cdots\commutator{l_{ir},h_r}$ ($i=1,\ldots,r$) and $y\con h\commutator{l_1,h_1}\cdots\commutator{l_r,h_r}$ modulo $\lcssubgrp_\omega(H)$. 
Assuming $H$ is nilpotent (so $\lcssubgrp_\omega(H)=\trivgrp$), the last congruence shows that
$$
y=\varphi(x)\in \varphi(n_1')^H\cdots\varphi(n'_{k_{\catNil}})^H,
$$
where $k_{\catNil}=2r+1$ and $\seq{n'_j}_{j=1}^{k_{\catNil}}$ is a fixed sequence with entries in the set $\set{n,n_1^{\pm 1},\cdots,n_r^{\pm 1}}$. 
Thus $P$ cannot be $\catNil$-approximable.
\end{proof}
To prove that $P$ is not $\catSol$-approximable, we need the following deeper result of Segal on finite solvable groups:

\begin{theorem}[Theorem 2.1 from \cite{segal2000closed}]\label{thm:com_gen_id_sol_grp}
	Assume $G$ is a finite solvable group and $\lcssubgrp_\omega(G)\gensubgrp{g_1,\ldots,g_m}=G$ for some $m\in\nats$. Moreover, assume that $G$ is generated by $d$ elements. Then there is a fixed sequence $\seq{i_j}_{j=1}^{m'}$ of indices in $\set{1,\ldots,m}$, whose entries and length $m'$ only depend on $d$ and $m$ such that 
	$$
	\lcssubgrp_\omega(G)=\prod_{j=1}^{m'}\commutator{\lcssubgrp_\omega(G),g_{i_j}}.
	$$
\end{theorem}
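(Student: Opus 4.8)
The plan is to set $L\defeq\lcssubgrp_\omega(G)$ and to exploit the two structural features this subgroup carries. First, since the lower central series of the \emph{finite} group $G$ stabilises at $L$, we have $\commutator{L,G}=L$, so $L$ is ``$G$-perfect''; secondly, $G/L$ is nilpotent, so the generation hypothesis reads $G=L\gensubgrp{g_1,\ldots,g_m}$. I would prove the identity by induction on the derived length of $L$, the conceptual heart being the abelian base case, which I would dispatch by a module computation.

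\emph{Base case.} Suppose $L$ is abelian and put $W\defeq\gensubgrp{g_1,\ldots,g_m}$, so that $G=LW$ and, since $\commutator{L,L}=1$ forces $\commutator{L,lw}=\commutator{L,w}$, one has $L=\commutator{L,G}=\commutator{L,W}$. Regard $L$ as a module over $\ints[W]$; then each $\commutator{L,g_i}=(g_i-1)L$ is a subgroup, and because $L$ is abelian the product $M\defeq\commutator{L,g_1}\cdots\commutator{L,g_m}=\sum_i(g_i-1)L$ is again a subgroup. The key observation is that $M$ is $W$-invariant: from the identity $g_k(g_i-1)=(g_k-1)g_i+(g_i-1)-(g_k-1)$ one reads off $g_k\cdot(g_i-1)L\subseteq(g_k-1)L+(g_i-1)L\subseteq M$ (using $g_iL=L$), and finiteness of $W$ yields invariance under the $g_k^{-1}$ as well. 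Hence $M$ is a submodule containing every $(g_i-1)L$, so $M\supseteq I_WL=\commutator{L,W}=L$, where $I_W$ is the augmentation ideal (generated as a left ideal by the $g_i-1$ since $W=\gensubgrp{g_1,\ldots,g_m}$). Thus $L=\commutator{L,g_1}\cdots\commutator{L,g_m}$, giving $m'=m$ with the sequence $(1,\ldots,m)$.

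\emph{Inductive step.} For $L$ of derived length $>1$ set $K\defeq\commutator{L,L}$, a characteristic subgroup of $L$ and hence $K\norsubgrpeq G$, with $L/K$ abelian and $K$ of smaller derived length. Applying the base case in $G/K$ (noting $\lcssubgrp_\omega(G/K)=L/K$) gives $L=\commutator{L,g_1}\cdots\commutator{L,g_m}\cdot K$, so it remains to cover $K$ by a bounded product of sets $\commutator{L,g_{i_j}}$. The delicate point is that one must keep commuting \emph{all} of $L$ (not just $K$) with the $g_i$: because $L=\lcssubgrp_\omega(G)$ may act non-trivially on $K$, one cannot simply re-apply the inductive statement to $K$ with the same generators, and indeed the naive reduction ``$\commutator{K,G}=\prod_j\commutator{K,g_{i_j}}$'' is too weak. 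The plan is therefore to refine $L\supseteq K\supseteq\cdots\supseteq\trivgrp$ to a $G$-chief series and climb it, at each abelian chief factor re-running the module computation while feeding in, via Lemma \ref{lem:com_gen_id_wrt_lcs} and the three-subgroup lemma, the contributions of those elements of $L$ that act non-trivially on the factor.

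\emph{The main obstacle.} The real danger — and the reason solvability is indispensable — is that the number of commutator sets needed could grow with the derived (or chief) length, whereas the theorem demands a bound depending only on $d$ and $m$. Controlling this is the technical core of \cite{segal2000closed}: one invokes the fact that in a $d$-generated finite solvable group the Gaschütz/crown theory bounds, in terms of $d$, the multiplicities of the $G$-chief factors in each isomorphism class, which caps how many commutator sets are required per layer. Propagating the bounded product expressions up the chief series while absorbing these coinvariant contributions is the step I expect to be the genuine obstacle, far more so than the module computation of the base case.
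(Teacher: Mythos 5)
First, a point of reference: the paper does not prove this statement at all --- it is imported verbatim as Theorem 2.1 of Segal \cite{segal2000closed}, and all of the hard content lives in that paper. So your proposal can only be measured against Segal's argument, not against anything internal to the article. Your base case is correct and clean: for $L\defeq\lcssubgrp_\omega(G)$ abelian one has $\commutator{L,G}=L$ and $\commutator{L,G}=\commutator{L,W}$ with $W\defeq\gensubgrp{g_1,\ldots,g_m}$, the identity $g_k(g_i-1)=(g_k-1)g_i+(g_i-1)-(g_k-1)$ shows that $\sum_i(g_i-1)L$ is a $W$-submodule containing the augmentation submodule $I_WL=\commutator{L,W}=L$, and this yields the theorem with $m'=m$ and the uniform sequence $(1,\ldots,m)$.

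The genuine gap is the inductive step, which you yourself label \enquote*{the step I expect to be the genuine obstacle}: that obstacle \emph{is} the theorem, so what you have is a plan plus a solved toy case, not a proof. Moreover, the tool you propose for closing it does not work. Gasch\"utz/crown theory bounds, in terms of $d$, only the multiplicities of \emph{complemented} (non-Frattini) chief factors; Frattini chief factors inside $\lcssubgrp_\omega(G)$ can occur with multiplicity unbounded in terms of $(d,m)$, and the chief length of $\lcssubgrp_\omega(G)$ admits no bound in $(d,m)$ whatsoever, so a per-layer argument along a chief series cannot by itself keep $m'$ bounded. Worse, your diagnosis of which layers are dangerous is off: the killer is not a layer on which $L$ acts non-trivially, but a $G$-\emph{central} chief factor inside $\lcssubgrp_\omega(G)$. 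These genuinely occur: for $G=\SL_2(3)$ one has $L=\lcssubgrp_\omega(G)=Q_8$, and $Z(Q_8)$ is a $G$-central chief factor inside $L$. On such a layer $A$ your module computation returns $\commutator{A,g_i}=\trivgrp$ for every $i$, i.e.\ it covers nothing; the central layer must instead be swept up by products of commutators of elements lying \emph{above} it (in $\SL_2(3)$, with $g$ of order $3$, one has $\commutator{Z(Q_8),g}=\trivgrp$ yet $-1=(-i)(-i)\in\commutator{Q_8,g}\commutator{Q_8,g}$), a mechanism entirely absent from your sketch and not supplied by Lemma \ref{lem:com_gen_id_wrt_lcs} or the three-subgroup lemma. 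Organizing this \enquote*{absorption from above} so that the number of factors stays bounded independently of the length of the series, and so that one fixed index sequence works simultaneously for all $(d,m)$-bounded groups, is exactly what Segal's induction accomplishes through several auxiliary quantitative results on finite solvable groups; it is also why the unrestricted finite case later required the full machinery (and CFSG) of \cite{nikolovsegal2012generators}. As it stands, your proposal proves the abelian case and correctly locates, but does not cross, the real difficulty.
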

\begin{proof}[Proof of Theorem \ref{thm:fin_gen_perf_grp_not_C_approx}, Part 2]
Assume that $H$ is solvable. We want to apply Theorem \ref{thm:com_gen_id_sol_grp} to $G\defeq H$.
Since $\varphi$ is surjective, the elements $y_1=\varphi(x_1),\ldots,y_r=\varphi(x_r)$ generate $H$, so we may set $d\defeq r$. We still have to define the elements $g_1,\ldots,g_m\in G$. From the above congruences we conclude that the sequence 
$$
h_1,\ldots,h_r,(h_1^{-1})^{l_{11}},\cdots,(h_r^{-1})^{l_{1r}},\cdots,(h_1^{-1})^{l_{r1}},\cdots,(h_r^{-1})^{l_{rr}}
$$
is a good choice for $g_1,\ldots,g_m$. Thus $m\defeq r(r+1)$ is bounded in terms of $r$.

The theorem gives us, similarly as in the nilpotent case, a fixed sequence $\seq{n''_j}_{j=1}^{k_{\catSol}}$ with entries in $\set{n,n_1^{\pm 1},\cdots,n_r^{\pm 1}}$, whose length $k_{\catSol}=k_{\catNil}+2m'$ is bounded in terms of $r$, such that
$$
y=\varphi(x)=\varphi(n_1'')^H\cdots\varphi(n_{k_{\catSol}}'')^H.
$$
Thus $P$ cannot be $\catSol$-approximable.
\end{proof}
Note that finite generation is crucial here. Indeed, there exist countably infinite locally finite-$p$ groups which are perfect and even characteristically simple \cite{mclain1954characteristically}. By Example \ref{exl:loc_C_impl_C_approx}, these groups are $\catNil$-approximable (since finite $p$-groups are nilpotent), but by definition they are not finitely generated. It is known that locally finite-solvable groups cannot be non-abelian simple \cite[p.~154]{robinson1972finiteness}, but it seems to be an open problem if there exist $\catSol$-approximable simple groups.

\section{On $\catFin$-approximable groups}\label{sec:Fin_approx_grps_hom_PSL}

Let $\catPSL$ be the class of simple groups of type $\PSL_n(q)$, i.e.~$n\in\nats_{\geq 2}$ and $q$ is a prime power and $(n,q)\neq (2,2),(2,3)$, and recall that $\catFin$ is the class of all finite groups.
In this section we prove the following result.

\begin{theorem}\label{thm:smpl_Fin_approx_grp_ultraprod_smpl_grps}
	Any non-trivial finitely generated $\catFin$-approximable group has a non-trivial $\catPSL$-approximable quotient. In particular, every simple $\catFin$-approximable group is $\catPSL$-approximable.
\end{theorem}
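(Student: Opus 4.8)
The plan is to reduce everything to producing a single non-trivial homomorphism $G \to \prod_\calU(P_i,\ell_{P_i}^{\rmc})$ into a metric ultraproduct of $\catPSL$-groups with their conjugacy length functions. Its image is a non-trivial quotient of $G$ which, being a subgroup of such an ultraproduct, is $\catPSL$-approximable by the converse half of Lemma \ref{lem:char_C_approx_abs_grp_via_ultraprod}; and when $G$ is simple such a homomorphism is automatically injective, giving the second assertion. If $G$ is not perfect it has a non-trivial finite cyclic quotient, and every finite cyclic group embeds into $\PSL_2(q)$ for a suitable prime power $q$, so the claim is immediate. Hence from now on I assume $G$ is perfect (this is the only remaining case when $G$ is simple, the abelian simple groups being covered above).

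Fix an embedding $G \into \prod_\calU(H_i,\ell_i)$ from Lemma \ref{lem:char_C_approx_abs_grp_via_ultraprod}, each $H_i$ generated by the images of a fixed finite generating set of $G$. The engine is a dichotomy supplied by Theorem \ref{thm:fin_gen_perf_grp_not_C_approx}: a non-trivial finitely generated perfect group is never $\catSol$-approximable, so \emph{any} homomorphism from $G$ into a metric ultraproduct of finite solvable groups is trivial. I would apply this to the conjugation action $\Theta_i\colon H_i\to\Aut(V_i)$ of $H_i$ on the product $V_i$ of its non-abelian chief factors: the kernel of $\Theta_i$ centralizes every non-abelian chief factor and is therefore solvable, so the induced map $G\to\prod_\calU(H_i/\ker\Theta_i)$ cannot be trivial (else $G$ would embed into $\prod_\calU(\ker\Theta_i,\ell_i)$, an ultraproduct of finite solvable groups). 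Iterating—each time splitting off the solvable glue (using Schreier's conjecture that $\mathrm{Out}$ of a finite simple group is solvable) and descending to the socle, the residual permutation actions contributing only further non-abelian simple (indeed alternating) composition factors—terminates, as group orders strictly decrease, in a non-trivial homomorphism $\eta\colon G\to\prod_\calU(T_i,\ell_i')$ with each $T_i$ a direct product of non-abelian finite simple groups and $\ell_i'$ the induced invariant length.

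The second step turns the $T_i$ into single $\catPSL$-groups with conjugacy length. Here I would embed each simple factor of $T_i$ by a length-efficient representation—the natural module for the groups of Lie type, the permutation module for the alternating groups—and assemble these block-diagonally into one $\PSL_{N_i}(q_i)$. On the image the projective rank length is then the appropriate average of the factorwise ranks, which by the equivalence $\ell^{\rmc}\asymp\ell^{\rm pr}$ on non-abelian finite simple groups of Liebeck and Shalev \cite{liebeckshalev2001diameters} is comparable both to the product conjugacy length $\ell_{T_i}^{\rmc}$ and to the conjugacy length of the ambient $\PSL_{N_i}(q_i)$. The same covering-number bounds of \cite{liebeckshalev2001diameters} yield, for every invariant length function $\ell$ of diameter $1$ on a non-abelian finite simple group $S$, the inequality $\ell_S^{\rmc}\le C\ell$; this is what forces the images of relators, being $\ell_i'$-small, to have vanishing conjugacy length, so that the composite descends to $G$.

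The step I expect to be the genuine obstacle is reconciling the arbitrary invariant length $\ell_i'$ delivered by the hypothesis with the canonical conjugacy length $\ell_{T_i}^{\rmc}$ demanded by the target. Non-triviality of $\eta$ is witnessed only in $\ell_i'$, whereas the $\PSL$-ultraproduct records only $\ell^{\rmc}$, and on a product of (possibly unboundedly many) simple factors these two invariant length functions are genuinely incomparable: an element concentrated on a few factors can have $\ell_i'$ bounded below while its product conjugacy length tends to $0$. Ruling this out is where I would deploy the theorem of the first author and Segal \cite{nikolovsegal2012generators}: the finite generation and perfectness of $\eta(G)$, via their bounds on commutator width and bounded generation in finite groups, constrain how the image can spread across the simple factors and guarantee that some element persists with conjugacy length bounded away from $0$. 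Combining this with the comparison of the previous step produces a non-trivial homomorphism $G \to \prod_\calU(\PSL_{N_i}(q_i),\ell^{\rmc})$, and hence the desired non-trivial $\catPSL$-approximable quotient.
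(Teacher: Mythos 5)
Your opening moves are sound: the kernel of the conjugation action of $H_i$ on the product of its non-abelian chief factors is indeed solvable, and combining Theorem \ref{thm:fin_gen_perf_grp_not_C_approx} with the converse half of Lemma \ref{lem:char_C_approx_abs_grp_via_ultraprod} does show that every homomorphism from a non-trivial finitely generated perfect group into a metric ultraproduct of finite solvable groups is trivial. But your reduction to an ultraproduct of semisimple groups has a genuine termination gap. Each round of your iteration is a dichotomy applied at the ultraproduct level, and the groups fed into round $k$ (the permutation quotients) still have unbounded order for every fixed $k$: the fact that orders strictly decrease bounds the number of rounds needed for each fixed index $i$, but not uniformly in $i$. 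Finite groups have unbounded \emph{socle depth} (take $H=S^{\card{K}}\rtimes K$ with $K$ regular and itself of the same shape, iterated), so no fixed number of rounds suffices, and the descending chain of quotients $G\twoheadrightarrow G_1\twoheadrightarrow G_2\twoheadrightarrow\cdots$ need not stabilize either. The paper avoids this problem entirely by never decomposing the approximating groups: it works inside the profinite completion $\completion{\freegrp}$ of the free group presenting $G$, passes to $\completion{\freegrp}/\completion{\freegrp}_0$, where $\completion{\freegrp}_0$ is the intersection of the kernels of all \emph{almost simple} quotients, and there Goursat's Lemma (Lemma \ref{lem:prod_alm_smpl_grps}) yields a semisimple normal subgroup with solvable quotient in a single step --- no permutation part appears, because almost simple groups have a unique simple normal subgroup --- while the (highly non-solvable) kernel $\completion{\freegrp}_0$ is handled by Theorem 1.7 of \cite{nikolovsegal2012generators} in Claim 1, not by any solvability argument.

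The second gap is the one you yourself flag as the expected obstacle, and it is precisely the crux: your proposal for it is a statement of intent, not an argument. In the paper this step is Claim 2 together with Theorem 5.12 of \cite{nikolovsegal2012generators}: every proper normal subgroup of the semisimple group $K/\completion{\freegrp}_0=\prod_{i}S_i$ lies in a maximal one, the quotient by \emph{any} maximal normal subgroup is automatically a metric ultraproduct with respect to the conjugacy length functions (so no comparison between the given length functions and $\ell^{\rmc}$ is ever attempted), and non-triviality of the image there is proved via Lemma 3.5 of \cite{nikolovsegal2012generators} and Theorem 1.1 of \cite{liebeckshalev2001diameters}, using crucially that the images $\nathom{x_1},\ldots,\nathom{x_r}$ of the free generators are \emph{dense} in $\completion{\freegrp}/\completion{\freegrp}_0$ and hence induce all inner automorphisms of each simple factor $S$, giving $S=\left(\prod_{i=1}^{r}\commutator{S,\nathom{x_i}}\commutator{S,\nathom{x_i}^{-1}}\right)^{\ast e}$ with $e$ uniform over all factors. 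In your setting this mechanism has no foothold: the finitely many generators of $\eta(G)$ need not project onto, nor induce all inner automorphisms of, the simple factors of the $T_i$, and you cannot repair this by modifying lifts by null sequences (for the supremum-of-discrete length function, a null sequence is eventually trivial along the ultrafilter). Your fallback, assembling all simple factors of $T_i$ block-diagonally into a single $\PSL_{N_i}(q_i)$, also does not work as stated: factors of different characteristic cannot be placed in one matrix group over one field, and the induced projective rank length is a rank-weighted average of the factorwise lengths, which is not Lipschitz comparable to $\ell^{\rmc}_{T_i}$, a $\log\card{S_{ij}}$-weighted average. The paper sidesteps both issues by embedding one simple group per index into one $\PSL_{n_i}(q_i)$ only \emph{after} the conjugacy-length ultraproduct has already been produced by the structure theory.
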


To prove Theorem \ref{thm:smpl_Fin_approx_grp_ultraprod_smpl_grps} we need some preparation. At first we recall a classical lemma of Goursat \cite{goursat1889substitutions}:

\begin{lemma}[Goursat's Lemma]
	Let $G\subgrpeq K\times L$ be a subdirect product, i.e.~the restricted projection maps $\pi_K:G\to K$, $\pi_L:G\to L$ are surjective. Set $M\defeq\ker(\pi_L)$ and $N\defeq\ker(\pi_K)$. Then $M\norsubgrpeq K$, $N\norsubgrpeq L$, and the image of $G$ in $K/M\times L/N$ is the graph of an isomorphism.
\end{lemma}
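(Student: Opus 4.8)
The plan is to carry out the standard verification underlying Goursat's correspondence, with the only genuine care needed in the well-definedness of the induced map. First I would pin down the two subgroups concretely. Since $\pi_K$ restricted to $\ker(\pi_L)$ is injective (an element of $G$ with both coordinates trivial is trivial), I identify $M=\ker(\pi_L)$ with its image in $K$, namely $M=\set{k\in K}[(k,1_L)\in G]$, and symmetrically $N=\set{l\in L}[(1_K,l)\in G]$.

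The first substantive step is normality. To see $M\norsubgrpeq K$, I would take $k\in M$ and an arbitrary $k'\in K$; since $\pi_K$ is surjective there is some $l'\in L$ with $(k',l')\in G$, and then
$$
(k',l')^{-1}(k,1_L)(k',l')=(k'^{-1}kk',\,1_L)\in G,
$$
so $k'^{-1}kk'\in M$. The proof that $N\norsubgrpeq L$ is identical with the two factors interchanged.

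The second step is to produce the isomorphism $\theta:K/M\to L/N$ whose graph is the image of $G$. Given $\nathom{k}\in K/M$, surjectivity of $\pi_K$ yields some $(k,l)\in G$, and I set $\theta(\nathom{k})\defeq\nathom{l}=lN$. To check this is well-defined I must verify independence both of the lift and of the coset representative: if $(k,l),(k,l')\in G$ then $(1_K,l^{-1}l')\in G$ forces $l^{-1}l'\in N$, while if $k^{-1}k''\in M$ then $(k'',l)=(k,l)(k^{-1}k'',1_L)\in G$ reduces the second representative to the first case. The homomorphism property is immediate from $(k_1,l_1)(k_2,l_2)=(k_1k_2,l_1l_2)\in G$; surjectivity of $\theta$ follows from surjectivity of $\pi_L$; and injectivity follows because $\theta(\nathom{k})=\nathom{1_L}$ means some $(k,l)\in G$ has $l\in N$, so $(k,1_L)=(k,l)(1_K,l)^{-1}\in G$ gives $k\in M$. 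Hence $\theta$ is an isomorphism, and by construction the image of $G$ in $K/M\times L/N$ equals $\set{(\nathom{k},\theta(\nathom{k}))}[\nathom{k}\in K/M]$, the graph of $\theta$.

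I expect no serious obstacle here, as the result is elementary; the one point demanding attention is precisely the two-fold well-definedness check for $\theta$, where the roles of $M$ and $N$ as the fibres over the identity make the argument go through.
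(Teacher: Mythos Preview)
Your proof is correct and is the standard verification of Goursat's Lemma. The paper does not actually supply a proof of this statement; it merely recalls it as a classical result of Goursat and cites the original source, so there is nothing to compare against beyond noting that your argument is the usual one.
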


We need the preceding lemma for the following auxiliary result. Recall that a profinite group is called semisimple if it is the direct product of finite non-abelian simple groups. Moreover, a finite group $G$ is \newnotion{almost simple} if it has a unique minimal normal subgroup $N$ which is non-abelian simple; in this case $N\norsubgrpeq G\subgrpeq\Aut(N)$.

\begin{lemma}\label{lem:prod_alm_smpl_grps}
	Let $G$ be a closed subdirect product of a profinite group $A=\prod_{i\in I}{A_i}$, where $A_i$ is almost simple ($i\in I$). Then $G$ contains a closed normal semisimple subgroup $H$ such that $G/H$ is solvable of derived length at most three and each simple factor of $H$ is normal in $G$. 
\end{lemma}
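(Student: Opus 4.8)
The plan is to take $H \defeq G \setmeet S$, where $S \defeq \prod_{i \in I} N_i$ is the product of the socles $N_i$ of the almost simple factors $A_i$. Since each $N_i$ is closed in $A_i$, the subgroup $S$ is a closed normal subgroup of $A$, so $H$ is closed and normal in $G$, and $G/H \iso GS/S \subgrpeq A/S$. From the definition of almost simple we have $N_i \norsubgrpeq A_i \subgrpeq \Aut(N_i)$ with $N_i$ identified with $\operatorname{Inn}(N_i)$, hence $A_i/N_i \into \operatorname{Out}(N_i)$ and therefore $A/S \into \prod_{i \in I} \operatorname{Out}(N_i)$. By the classification of finite simple groups each $\operatorname{Out}(N_i)$ is solvable of derived length at most three (the extremal case being $\operatorname{Out}(P\Omega_8^+(q))$, which contains a copy of $S_4$); since derived length is inherited by products and subgroups, $A/S$ and hence $G/H$ is solvable of derived length at most three. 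This disposes of the quotient, and it remains to analyze $H$ itself.

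Next I would show that $H$ is semisimple with each simple factor normal in $G$. Because $H \norsubgrpeq G$ and each restricted projection $\pi_i \colon G \to A_i$ is surjective, $\pi_i(H) \norsubgrpeq A_i$ and $\pi_i(H) \subgrpeq N_i$; as $\pi_i(H)$ is then normal in the simple group $N_i$, we get the dichotomy $\pi_i(H) \in \set{\trivgrp, N_i}$. Discarding the indices where $\pi_i(H) = \trivgrp$, we may regard $H$ as a closed subdirect product of the finite non-abelian simple groups $N_i$ over the remaining index set $J$. Applying Goursat's Lemma to pairs, the relation on $J$ that links $i$ and $j$ precisely when $H$ projects to a proper (diagonal) subgroup of $N_i \times N_j$ is an equivalence relation, whose classes $B_\alpha$ partition $J$; over each block $H$ is a diagonal copy $\Delta_\alpha$ of a single finite simple group, so that $H = \prod_\alpha \Delta_\alpha$ is semisimple.

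For the normality of the simple factors I would use that conjugation by $g = (g_i) \in G$ acts on $A$ coordinatewise, by $(x_i) \mapsto (g_i x_i g_i^{-1})$; this automorphism preserves every coordinate subgroup $N_i$ and maps $H$ onto $H$. Since $\Delta_\alpha$ is the unique minimal closed normal subgroup of $H$ with support $B_\alpha$, and supports are preserved under a coordinate-preserving automorphism while the blocks $B_\alpha$ are pairwise disjoint, each $\Delta_\alpha$ is fixed setwise, i.e. $\Delta_\alpha \norsubgrpeq G$. I expect the main obstacle to be the rigorous profinite bookkeeping behind the subdirect-product decomposition: verifying that the Goursat correspondences patch into a genuine equivalence relation for non-abelian simple groups, and that the (possibly infinite) product $\prod_\alpha \Delta_\alpha$ of diagonal blocks is closed and equal to $H$ rather than merely dense in it—this is where the structure theory of semisimple profinite groups from \cite{wilson1998profinite} would be invoked—together with pinning down the uniform derived-length bound on $\operatorname{Out}(N_i)$ supplied by the classification.
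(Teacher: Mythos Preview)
Your approach is correct and takes a genuinely different route from the paper's. You define $H$ globally as $G\setmeet\prod_{i\in I}N_i$ and then analyse it as a closed subdirect product of the socles; the paper instead works locally, realising $G$ as the inverse limit of its finite projections $\pi_J(G)$, applying Goursat's Lemma inductively on $\card{J}$ to pin down $S_1\times\cdots\times S_r\norsubgrpeq\pi_J(G)\subgrpeq\Aut(S_1)\times\cdots\times\Aut(S_r)$ at each finite stage, and then taking $H$ to be the inverse limit of these socles. The two constructions yield the same subgroup, since $\pi_J(G)\setmeet\prod_{j\in J}N_j$ coincides with the socle of $\pi_J(G)$ at every finite level. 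Your route is more direct and conceptual, but, exactly as you anticipate, it needs the structure theorem for closed subdirect products of non-abelian finite simple groups (indeed available in \cite{wilson1998profinite}); the paper's inductive argument sidesteps this by keeping everything finite until the very last step. Both arguments invoke Schreier's conjecture for the derived-length bound on $G/H$, and your treatment of the normality of the simple factors via coordinate-preserving conjugation is essentially what the paper leaves as ``routine''.

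One minor simplification: your dichotomy $\pi_i(H)\in\set{\trivgrp,N_i}$ is correct, but the trivial case never occurs, so no indices need to be discarded. Since $G/H$ has derived length at most three you get $G^{(3)}\subgrpeq H$, hence $\pi_i(H)\supgrpeq\pi_i(G^{(3)})=A_i^{(3)}\supgrpeq N_i$ (the last inclusion because $N_i$ is perfect and $N_i\subgrpeq A_i$), so $\pi_i(H)=N_i$ for every $i\in I$. Thus $H$ is already subdirect in $\prod_{i\in I}N_i$.
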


\begin{proof}
	For $J\subseteq I$ let $\pi_J:A\to\prod_{j\in J}{A_j}$ be the projection maps. Then by Proposition 1.2.2 of \cite{wilson1998profinite} $G$ is the inverse limit of the groups $\pi_J(G)$ ($J\subseteq I$ finite) together with the natural maps $\pi_J(G)\to\pi_{J'}(G)$ for $J'\subseteq J$.
	
	Using Goursat's Lemma one can show by induction on $\card{J}$ that for $J\subseteq I$ finite there exist $r\in\nats$ and finite non-abelian simple groups $S_1,\ldots,S_r$ such that $S_1\times\cdots\times S_r\norsubgrpeq\pi_J(G)\subgrpeq\Aut(S_1)\times\cdots\times\Aut(S_r)$. In this situation, for $j_0\in I\setminus J$ the projection $\pi_{J\setjoin\set{j_0}}(G)\to\pi_J(G)$ either is an isomorphism or there exists a finite non-abelian simple group $S_{r+1}$ such that $S_1\times\cdots\times S_{r+1}\norsubgrpeq\pi_{J\setjoin\set{j_0}}(G)\subgrpeq\Aut(S_1)\times\cdots\times\Aut(S_{r+1})$ and the restriction of $\pi_{J\setjoin\set{j_0}}(G)\to\pi_J(G)$ to the socle $S_1\times\cdots S_{r+1}$ of $\pi_{J\setjoin\set{j_0}}$ is the natural projection onto $S_1\times\cdots\times S_r$ (the socle of $\pi_J(G)$).
	
	Now it is clear that $G$, as the inverse limit of the groups $\pi_J(G)$ ($J\subseteq I$ finite) and the maps $\pi_J(G)\to\pi_{J'}(G)$, contains the inverse limit $H$ of the socles of these groups together with the restricted maps. It is routine to check that $H$ has the desired properties. The fact that $G/H$ is solvable of derived length at most three is implied by Schreier's conjecture.
\end{proof}

Now we can start with the proof of Theorem \ref{thm:smpl_Fin_approx_grp_ultraprod_smpl_grps}: We will prove that our group is $\catPSL$-approximable where we endow the groups $\PSL_n(q)$ with the conjugacy metric -- see Equation \eqref{conjmetric} for a definition.

If the group in the theorem is not perfect, it has a non-trivial cyclic quotient which clearly has the desired property.
So let $P=\freegrp/N$ be perfect, where $\freegrp$ is freely generated by $x_1,\ldots,x_r$ and $N\norsubgrpeq\freegrp$. Let $\completion{\freegrp}$ be the profinite completion of $\freegrp$ and $M\defeq\gennorsubgrp{N}_{\completion{\freegrp}}$ be the normal closure of $N$ in $\completion{\freegrp}$. Identifying $\freegrp$ with its image in the profinite completion, it follows from Theorem \ref{thm:char_C_approx_grps_pro_C} that $P$ is $\catFin$-approximable if and only if $N=M\setmeet\freegrp$, since for a sequence $n_1,\ldots,n_k\in N$ we have 
$$
\closure{n_1^\freegrp\cdots n_k^\freegrp}=\freegrp\setmeet n_1^{\completion{\freegrp}}\cdots n_k^{\completion{\freegrp}},
$$
where the closure on the left is taken in $\freegrp$. This is equivalent to saying that the map $\freegrp\to\completion{\freegrp}/M$ induces an embedding of $P$ in $\completion{\freegrp}/M$.

For a profinite group $G$ set $G_0\defeq\bigsetmeet\set{O\opnorsubgrpeq G}[G/O\text{ is almost simple}]$.

\begin{claim}\label{clm:1}
	It holds that $\freegrp\not\subgrpeq \completion{\freegrp}_0 M$.
\end{claim}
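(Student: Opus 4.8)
The plan is to argue by contradiction, to reformulate everything inside $Q\defeq\completion{\freegrp}/\completion{\freegrp}_0$, to discard the solvable part of $Q$ using that $P$ is perfect, and to locate the obstruction in the (infinite) semisimple part, where the width estimates of Liebeck--Shalev \cite{liebeckshalev2001diameters} and Nikolov--Segal \cite{nikolovsegal2012generators} do the real work.

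\textbf{Reformulation.} Let $\rho\colon\completion{\freegrp}\to Q$ be the quotient map. Since $\rho(\completion{\freegrp}_0)=\trivgrp$ and $M=\gennorsubgrp{N}_{\completion{\freegrp}}$ is the (abstract) normal closure of $N$, applying $\rho$ to $\completion{\freegrp}_0 M$ yields $\rho(\completion{\freegrp}_0 M)=\rho(M)=\gennorsubgrp{\rho(N)}_Q$, the abstract normal closure of $\rho(N)$ in $Q$. Hence $\freegrp\subseteq\completion{\freegrp}_0 M$ would force $\rho(\freegrp)\subseteq\rho(M)$. As $\freegrp$ is dense in $\completion{\freegrp}$, the image $\rho(\freegrp)$ is dense in $Q$, so $\closure{\rho(N)}\norsubgrpeq Q$ equals $Q$, i.e.\ $\rho(N)$ is dense. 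Equivalently, because $N\subseteq M\subseteq\completion{\freegrp}_0 M$, the composite $\freegrp\to\completion{\freegrp}/\completion{\freegrp}_0 M=Q/\rho(M)$ kills $N$ and factors through a homomorphism $P\to\completion{\freegrp}/\completion{\freegrp}_0 M$; the claim asserts exactly that this homomorphism is non-trivial. So I assume for contradiction that $\rho(\freegrp)\subseteq\rho(M)$, with $\rho(N)$ dense in $Q$, and aim to contradict the non-triviality of $P$.

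\textbf{Removing the solvable part.} By Lemma~\ref{lem:prod_alm_smpl_grps} applied to $Q$ (a closed subdirect product of the almost simple quotients of $\completion{\freegrp}$), there is a closed normal semisimple subgroup $H=\prod_k S_k\norsubgrpeq Q$ with each $S_k$ normal in $Q$ and $Q/H$ solvable of derived length at most three. Since $P$ is perfect we have $\freegrp=\freegrp'N$, so the image of $\freegrp$ in the solvable group $(Q/H)$ modulo the normal closure of the image of $N$ is simultaneously perfect and solvable, hence trivial; thus $\rho(\freegrp)\subseteq\rho(M)H$ holds automatically and the solvable quotient imposes no constraint. Consequently the containment $\rho(\freegrp)\subseteq\rho(M)$ can only fail inside $H$, and to refute it I must exhibit an $f\in\freegrp$ with $\rho(f)\in\rho(M)H$ but $\rho(f)\notin\rho(M)$; that is, I must show the abstract normal closure of the dense subgroup $\rho(N)$ does not swallow $\rho(\freegrp)$ in the semisimple part $H$.

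\textbf{The crux.} This is the step I expect to be the main obstacle. Every element of $\rho(M)=\gennorsubgrp{\rho(N)}_Q$ is a product of some \emph{fixed} number $t$ of $Q$-conjugates of elements of $\rho(N)$; projected to a factor $S_k$ it becomes a product of $t$ conjugacy classes of $S_k$. Because $\completion{\freegrp}$ (of rank $\geq 2$) surjects onto every finite simple group, the orders $\card{S_k}$ appearing in $H$ are unbounded, and the Liebeck--Shalev diameter bounds \cite{liebeckshalev2001diameters} show that writing a generic element of $S_k$ as such a product needs a number of factors growing like $\log\card{S_k}$; hence no single $t$ can succeed uniformly across all factors, which bounds the ``width'' of $\rho(M)$ inside the Cartesian product $H$. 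The commutator and bounded-generation estimates of Nikolov--Segal \cite{nikolovsegal2012generators} are what make this bound precise and uniform over the (topologically finitely generated) group $Q$. Using that $P$ is non-trivial --- so that $\freegrp$ genuinely surpasses $N$ and the image of $\freegrp$ in $H$ is not confined to any bounded-width set --- one produces an element of $\rho(\freegrp)$ lying outside $\rho(M)$. This contradicts $\rho(\freegrp)\subseteq\rho(M)$ and proves $\freegrp\not\subseteq\completion{\freegrp}_0 M$.
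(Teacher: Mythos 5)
Your reformulation in $Q\defeq\completion{\freegrp}/\completion{\freegrp}_0$ and the reduction modulo the solvable quotient are fine, but the final step is not an argument, and its central assertion is wrong. The abstract normal closure $\rho(M)=\gennorsubgrp{\rho(N)}_Q$ is the union over \emph{all} $t$ of products of $t$ conjugates of elements of $\rho(N)$; there is no fixed $t$, so the width-counting scheme has nothing to count, and the uniformity you appeal to (\enquote*{no single $t$ can succeed across all factors}) is precisely what would have to be proved for a single, explicitly produced element of $\rho(\freegrp)$ --- no such element is ever exhibited. Worse, the two results you cite push in the direction opposite to the one you need. You correctly note that under the contradiction hypothesis $\rho(N)$ is dense in $Q$; but then the conjugation action of $\rho(N)$ on each simple factor $S_k$ realizes every inner automorphism of $S_k$, so for any prescribed $s\in S_k$ there is $n\in\rho(N)$ with $\commutator{S_k,n}=\commutator{S_k,s}$. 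Choosing $s$ with a huge conjugacy class, Theorem 1.1 of \cite{liebeckshalev2001diameters} together with Lemma 3.5 of \cite{nikolovsegal2012generators} covers $S_k$ by a product of \emph{boundedly} many such sets --- this is exactly how the paper's Claim 2 shows that sets of this kind are \emph{large}. The Liebeck--Shalev lower bounds of order $\log\card{S_k}$ that you invoke apply only when the classes involved are small, which density rules out. The Nikolov--Segal theorems likewise establish that suitable normal closures have bounded width and are therefore big, never that they are small.

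There is also a structural gap: your argument uses only that $P$ is non-trivial and perfect, and never the standing hypothesis that $P$ is $\catFin$-approximable, i.e.~the identity $N=M\setmeet\freegrp$ established in the paper immediately before the claim. That hypothesis is indispensable: the rest of the proof of Theorem \ref{thm:smpl_Fin_approx_grp_ultraprod_smpl_grps} (Claim 2 onwards) makes no further use of approximability, so a proof of Claim \ref{clm:1} avoiding it would show that \emph{every} non-trivial finitely generated perfect group has a non-trivial $\catPSL$-approximable quotient --- in particular that every finitely generated simple group is weakly sofic --- a statement far beyond what the cited theorems deliver, and one that would render the approximability hypothesis of the theorem vacuous. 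The paper's own proof runs in the opposite direction from yours: assuming $\freegrp\subgrpeq\completion{\freegrp}_0M$, perfectness gives $y_i\in N$ with $x_i\con y_i$ modulo $\freegrp'$, the assumption gives $z_i\in M$ with $x_i\con z_i$ modulo $\completion{\freegrp}_0$, and Theorem 1.7 of \cite{nikolovsegal2012generators}, applied with $H\defeq\completion{\freegrp}_0$ and $Y\defeq\set{y_1^{\pm1},\ldots,y_r^{\pm1},z_1^{\pm1},\ldots,z_r^{\pm1}}$, shows that $M$ is \emph{large}: $M\supgrpeq\gennorsubgrp{Y}_{\completion{\freegrp}}\supgrpeq\commutator{\completion{\freegrp}_0,\completion{\freegrp}}$. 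Hence $\completion{\freegrp}_0M/M$ is abelian, and since approximability identifies $P$ with $\freegrp M/M\subgrpeq\completion{\freegrp}_0M/M$, the non-trivial perfect group $P$ would be abelian --- the desired contradiction. Note also that this contradiction is found inside $\completion{\freegrp}_0M/M$, which your passage to $Q$ kills entirely, and that Liebeck--Shalev is not needed for Claim \ref{clm:1} at all; it enters only in Claim 2.
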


\begin{proof}
	Assume the contrary. Then by perfectness of $P$ there are $y_i\in N$ such that $x_i\freegrp'=y_i\freegrp'$ ($i=1,\ldots,r$). By assumption there are also $z_i\in M$ such that $x_i{\completion{\freegrp}}_0=z_i{\completion{\freegrp}}_0$ ($i=1,\ldots,r$). Set $Y\defeq\set{y_1^{\pm 1},\ldots,y_r^{\pm 1},z_1^{\pm 1},\ldots,z_r^{\pm 1}}$. As ${\completion{\freegrp}}_0$ is closed, by definition, we have that 
	$$
	\completion{\freegrp}=\closure{{\completion{\freegrp}}_0 \gensubgrp{Y}}={\completion{\freegrp}}_0\closure{\gensubgrp{Y}}
	\quad\text{and}\quad
	\completion{\freegrp}=\closure{\freegrp'\gensubgrp{Y}}=\closure{\completion{\freegrp}'\gensubgrp{Y}},
	$$
	where all closures are taken in $\completion{\freegrp}$. Hence by Theorem 1.7 of \cite{nikolovsegal2012generators} applied to $G\defeq\completion{\freegrp}$ and $H\defeq{\completion{\freegrp}}_0$ we get that $M\supgrpeq\gennorsubgrp{Y}_{\completion{\freegrp}}\supgrpeq\commutator{\completion{\freegrp}_0,\completion{\freegrp}}\supgrpeq\completion{\freegrp}_0'$. Since ${\completion{\freegrp}}_0 M/M=\completion{\freegrp}_0/(\completion{\freegrp}_0\setmeet M)$ is abelian by the preceding argument,
	we cannot have $\freegrp\subgrpeq{\completion{\freegrp}}_0 M$, since otherwise $P$ would be abelian. Contradiction proving the claim.
\end{proof}

Claim \ref{clm:1} implies that $P$ has a non-trivial homomorphism to $\completion{\freegrp}/\completion{\freegrp}_0 M$. Apply Lemma \ref{lem:prod_alm_smpl_grps} to $G\defeq\completion{\freegrp}/{\completion{\freegrp}}_0$ as a subdirect product of all almost simple quotients of $\completion{\freegrp}$. Let $H=K/\completion{\freegrp}_0$ be the semisimple group provided by the lemma. As $\completion{\freegrp}/K$ is solvable, we cannot have $K\subgrpeq\completion{\freegrp}_0 M$, otherwise the image of $P$ in $\completion{\freegrp}/\completion{\freegrp}_0 M$ would be trivial, contradicting Claim \ref{clm:1}.

Hence $(K\setmeet\completion{\freegrp}_0 M)/\completion{\freegrp}_0$ is a proper normal subgroup of the semisimple group $H=K/\completion{\freegrp}_0=\prod_{i\in I}{S_i}$, where $S_i$ ($i\in I$) are the simple factors, so by Theorem 5.12 of \cite{nikolovsegal2012generators} it is contained in a maximal normal subgroup $L/\completion{\freegrp}_0$ of the former. By the same result, $K/L$ is isomorphic (as an abstract group) to a metric ultraproduct of the $S_i$ with the conjugacy length function $\ell_i\defeq\ell_{S_i}^{\rmc}$ ($i\in I$). Note that in this situation $L$ is even normal in $\completion{\freegrp}$, since $\ell_i$ is left invariant under $\Aut(S_i)$ and $S_i\norsubgrpeq\completion{\freegrp}/\completion{\freegrp}_0$ by Lemma \ref{lem:prod_alm_smpl_grps} ($i\in I$).

\begin{claim}
	In this setting $\freegrp\not\subgrpeq LM$.
\end{claim}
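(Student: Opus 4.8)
The plan is to argue by contradiction: assuming $\freegrp\subgrpeq LM$, I will show that the image of $P$ in the simple ultraproduct $K/L$ must be trivial, and that this is forbidden by the bounded-generation estimates of Liebeck and Shalev.

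First I would reduce to a bare product of finite simple groups. Since $\completion{\freegrp}/K$ is solvable and $P$ is perfect, the image of $P$ in the solvable quotient $\completion{\freegrp}/KM$ is perfect, hence trivial; as $\freegrp$ is dense and $KM$ is closed, this forces $\completion{\freegrp}=KM$. Moreover $(K\setmeet\completion{\freegrp}_0M)/\completion{\freegrp}_0\subgrpeq L/\completion{\freegrp}_0$ yields $M\setmeet K\subgrpeq L$, so by the modular law $K\setmeet LM=L(K\setmeet M)=L$, whence $\completion{\freegrp}/LM=KM/LM\iso K/L$. Therefore the claim $\freegrp\not\subgrpeq LM$ is equivalent to the non-triviality of the composite $\psi\colon P\to\completion{\freegrp}/LM\iso K/L$.

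Next I would exhibit many finite simple quotients of $P$. Since the closed normal subgroups of a product of finite non-abelian simple groups are exactly the sub-products, $(K\setmeet\completion{\freegrp}_0M)/\completion{\freegrp}_0$ equals $\prod_{i\in J}S_i$ for some $J\subgrpeq I$; by the paragraph preceding the statement (where $K\not\subgrpeq\completion{\freegrp}_0M$ was deduced from Claim \ref{clm:1}) we have $J\subsetneq I$. Writing $I'\defeq I\setminus J\neq\emptyset$ and using $\completion{\freegrp}=KM$, we obtain a continuous surjection $\Phi\colon\completion{\freegrp}\to\completion{\freegrp}/\completion{\freegrp}_0M\iso\prod_{i\in I'}S_i$. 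As $N\subgrpeq M\subgrpeq\completion{\freegrp}_0M=\ker\Phi$, the map $\Phi$ factors through $P$, and each coordinate $\Phi_i\colon\completion{\freegrp}\to S_i$ is onto (a continuous map onto a finite group is already onto on the dense subgroup $\freegrp$). Hence $P$ surjects onto $S_i$ for every $i\in I'$, with the images of $x_1,\dots,x_r$ generating $S_i$. Finally, the ultrafilter $\calU$ on $I$ furnished by Theorem 5.12 of \cite{nikolovsegal2012generators}, for which $L/\completion{\freegrp}_0=N_\calU$ and $K/L\iso\prod_\calU S_i$, satisfies $I'\in\calU$, precisely because $L/\completion{\freegrp}_0\supgrpeq\prod_{i\in J}S_i$.

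The last step is to spring the contradiction. Tracing through the isomorphisms, on the $\calU$-large set $I'$ the map $\psi$ sends $x_j$ to $(\Phi_i(x_j))_{i\in I'}$, so, as $I'\in\calU$, the condition $\freegrp\subgrpeq LM$ amounts to $\lim_\calU\ell^{\rmc}_{S_i}(\Phi_i(x_j))=0$ for every $j=1,\dots,r$. The key external input—proved at the end of this section from the diameter bounds of \cite{liebeckshalev2001diameters} together with the Lipschitz equivalence of $\ell^{\rmc}$ and $\ell^{\rm pr}$ on non-abelian finite simple groups—is that there is a constant $c=c(r)>0$, depending only on $r$, such that any $r$ elements generating a non-abelian finite simple group contain one of conjugacy length at least $c$. (Morally: an element of small conjugacy length fixes a subspace of small codimension, respectively moves few points, so boundedly many such elements generate a proper reducible, respectively intransitive, subgroup; in bounded rank every non-trivial element already has conjugacy length bounded below.) Applying this on $I'$, where the $\Phi_i(x_j)$ generate $S_i$, gives $\max_j\ell^{\rmc}_{S_i}(\Phi_i(x_j))\geq c$, whence $\lim_\calU\max_j\ell^{\rmc}_{S_i}(\Phi_i(x_j))\geq c>0$, contradicting the characterization above. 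The main obstacle is exactly this uniform lower bound on the conjugacy length of a generator, for which the classification of finite simple groups and the Liebeck--Shalev estimates are indispensable; the remaining steps are routine bookkeeping with subdirect products and the modular law.
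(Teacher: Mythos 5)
Your proposal is correct, but it takes a genuinely different route from the paper's proof. The paper argues by contradiction purely through commutator calculus and bounded generation: from $\freegrp\subgrpeq LM$ it deduces $\commutator{\freegrp,K}\subgrpeq\commutator{LM,K}\subgrpeq L\commutator{M,K}=L(M\setmeet K)=L$, hence $\completion{\freegrp}_0\commutator{\freegrp,K}\subgrpeq L$; it then shows this is impossible because on every simple factor $S$ of $H$ the elements $\nathom{x_1},\ldots,\nathom{x_r}$ induce all inner automorphisms, so $\card{\commutator{S,\nathom{x_{i_0}}}}\geq\card{S}^{1/r}$ for some $i_0$ (trivial center), and Lemma 3.5 of \cite{nikolovsegal2012generators} combined with Theorem 1.1 of \cite{liebeckshalev2001diameters} yields $\left(\prod_{i=1}^r{\commutator{S,\nathom{x_i}}\commutator{S,\nathom{x_i}^{-1}}}\right)^{\ast e}=S$ with $e=e(r)$ uniform in $S$, whence $K\subgrpeq\completion{\freegrp}_0\commutator{\freegrp,K}\subgrpeq L$, contradicting properness of $L$. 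You instead unwind the ultraproduct itself: via $\completion{\freegrp}=KM$, the modular law $K\setmeet LM=L$, and the identification of $(K\setmeet\completion{\freegrp}_0M)/\completion{\freegrp}_0$ with a sub-product $\prod_{i\in J}S_i$, you reduce the claim to the non-vanishing of the ultralimits $\lim_\calU\ell^{\rmc}_{S_i}(\Phi_i(x_j))$, show that $P$ surjects onto $S_i$ for all $i$ in $I'=I\setminus J$, and note that $I'\in\calU$ (your terse justification is right, but to spell it out: if $J\in\calU$, then every element of $\prod_{i\in I}S_i$ is congruent modulo $N_\calU$ to one supported on $J$, so $\prod_{i\in J}S_i\subgrpeq N_\calU=L/\completion{\freegrp}_0$ would force $L=K$, contradicting properness of $L$); a uniform lower bound on conjugacy lengths of generators then finishes the proof. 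All of this bookkeeping checks out. What your route buys: it dispenses with the bounded-generation input (Lemma 3.5 of \cite{nikolovsegal2012generators} and the Liebeck--Shalev diameter bound) altogether. What the paper's route buys: one uniform covering identity over all simple factors at once, with no need to locate the ultrafilter or track which coordinates survive.

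One attribution in your proof is wrong, though harmlessly so. The input you call the main obstacle --- that among any $r$ generators of a non-abelian finite simple group $S$ some element has conjugacy length at least $c(r)>0$ --- is not what is proved at the end of the section (that passage only establishes the Lipschitz equivalence of $\ell^{\rmc}$ and $\ell^{\rm pr}$), and it needs neither \cite{liebeckshalev2001diameters} nor the classification. It is elementary, with $c(r)=1/r$: if $s_1,\ldots,s_r$ generate $S$, then $\bigsetmeet_{j}\centralizersubgrp_S(s_j)=\centersubgrp(S)=\trivgrp$, so the map $g\mapsto(s_j^g)_j$ is injective, giving $\card{S}\leq\prod_j\card{s_j^S}$ and hence $\ell^{\rmc}_S(s_{j_0})\geq 1/r$ for some $j_0$. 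This is exactly the trivial-center count the paper itself uses to get $\card{\commutator{S,\nathom{x_{i_0}}}}\geq\card{S}^{1/r}$. With this substitution your argument is complete and, ingredient for ingredient, more elementary than the paper's.
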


\begin{proof}
	Otherwise $\commutator{\freegrp,K}\subgrpeq\commutator{LM,K}\subgrpeq L\commutator{M,K}=L(M\setmeet K)=L$. Here the first inclusion holds by assumption, whereas the second follows from the commutator identity $\commutator{lm,k}=\commutator{l,k}\commutator{\commutator{l,k},m}\commutator{m,k}$ for $k\in K$, $l\in L$ and $\commutator{l,k}\in L$, since $L\norsubgrpeq K$, and $\commutator{\commutator{l,k},m},\commutator{m,k}\in \commutator{K,M}=\commutator{M,K}$. The second last equality holds as $M,K\norsubgrpeq\completion{\freegrp}_0$, and the last by the choice of $L$. Hence $\completion{\freegrp}_0\commutator{\freegrp,K}\subgrpeq L$.
	
	Let $S$ be a simple factor of $H$. $\completion{\freegrp}/\completion{\freegrp}_0$ maps continuously on the finite discrete group $\Aut(S)$ via the conjugation action. The image of this map clearly contains the inner automorphism, since these are induced by $S$ itself. The elements $\nathom{x_1},\ldots,\nathom{x_r}$ generate a dense subgroup of $\completion{\freegrp}/\completion{\freegrp}_0$, which must induce all inner automorphisms of $S$ by the previous fact.
	
	As $S$ has trivial center, we have $\card{S/\centralizersubgrp_S(\nathom{x_{i_0}})}=\card{\commutator{S,\nathom{x_{i_0}}}}\geq\card{S}^{1/r}$ for some $i_0\in\set{1,\ldots,r}$. Lemma 3.5 of \cite{nikolovsegal2012generators} implies that $\prod_{i=1}^r{\commutator{S,\nathom{x_i}}\commutator{S,\nathom{x_i}^{-1}}}$ contains the normal subsets $\commutator{S,\nathom{x_i}}^S\subseteq S$ for $i=1,\ldots,r$. Since $\card{\commutator{S,\nathom{x_{i_0}}}^S}\geq\card{S}^{1/r}$, by Theorem 1.1 of \cite{liebeckshalev2001diameters} there is $e\in\nats$ only depending on $r$ such that
	$$
	\left(\prod_{i=1}^r{\commutator{S,\nathom{x_i}}\commutator{S,\nathom{x_i}^{-1}}}\right)^{\ast e}=S.
	$$
	This implies that $K\subgrpeq\completion{\freegrp}_0\commutator{\freegrp,K}$, since $e$ is independent of the simple factor $S$. But then $K\subgrpeq L$, a contradiction.
\end{proof}

From the previous claim we deduce that $P$ still has a non-trivial homomorphism to $\completion{\freegrp}/LM$. Since $\completion{\freegrp}/KM$ is solvable as a quotient of $\completion{\freegrp}/K$, this homomorphism restricts to $KM/LM$, which is a non-trivial homomorphic image of the metric ultraproduct $K/L$. Since the latter is simple by Proposition 3.1 of \cite{stolzthom2014lattice}, we are only left to show that $K/L$, which is a metric ultraproduct of the sequence $\seq{S_i}_{i\in I}$ of finite simple groups from above with conjugacy length function with respect to some ultrafilter $\calU$, embeds into a metric ultraproduct of groups $\PSL_{n_i}(q_i)$ equipped with the conjugacy length function $\ell^{\rm c}_i$ ($i\in I$), since then $P$ would have the same property.
	
Let us briefly sketch the argument for this: Firstly, if the limit of the ranks of the groups $S_i$ ($i\in I$) is bounded along the ultrafilter $\calU$ (where
the rank of the alternating group $A_n$ is defined to be $n$ and the sporadic groups are also considered as groups of bounded rank) the resulting ultraproduct will be a simple group
of Lie type over a pseudofinite field $k$ or an alternating group $A_n$, respectively. In the first case it clearly embeds into $\PSL_n(k)$ for $n\in\nats$ appropriately chosen. However, the latter is a metric ultraproduct of groups $\PSL_n(q_i)$ with conjugacy length function $\ell_i^{\rm c}$ ($i\in I$) for some sequence $\seq{q_i}_{i\in I}$ of prime powers. The second case is similar.
	
Hence we may assume that our ultraproduct does not involve finite simple groups from families of bounded rank.
	
We can further assume that it contains no alternating groups as we can replace any alternating group $A_n$ by $\PSL_n(q)$ for $q=p^e$ a power of a prime with $p>n$. Namely, the natural embedding $A_n\into\PSL_n(q)$, where $\PSL_n(q)$ is equipped with the projective rank length function $\ell_n^{\rm pr}$, induces the Cayley length function $\ell_n^{{\rm Cay},\set{\tau}}$ on $A_n$ with respect to the conjugacy class of a transposition of the ambient symmetric group $S_n$. The latter is Lipschitz equivalent to the conjugacy class length function by Theorem 2.15 of \cite{stolzthom2014lattice}.
	
Hence we can assume that all groups $S_i$ ($i\in I$) are classical Chevalley or Steinberg groups. But it follows from Lemmas 5.4, 6.4, and the end of Section 7 and Theorem 1.1 of \cite{liebeckshalev2001diameters} that the conjugacy length function and the projective rank function (coming from a natural embedding in some $\PSL_n(q)\subgrpeq\PGL_n(q)$) on such groups are also Lipschitz equivalent.
	
Hence we can embed our ultraproduct $K/L$ into an ultraproduct of groups $\PSL_{n_i}(q_i)$ equipped with the projective length function $\ell^{\rm pr}_i$ ($i\in I$). But by the former Lipschitz equivalence, $\ell^{\rm pr}_i$ can be replaced by the conjugacy length function $\ell_i^{\rm c}$ ($i\in I$). This ends the proof.

\section{On the approximability of Lie groups}\label{sec:approx_Lie_grps}

In this section we utilize the following theorem of the first author and Segal to deduce two results concerning the approximability of Lie groups by finite groups and one result on compactifications of pseudofinite groups.

\begin{theorem}[Theorem 1.2 of \cite{nikolovsegal2012generators}]\label{thm:gen_com_fin_grp}
	Let $g_1,\ldots,g_m$ be a symmetric generating set for the finite group $G$. If $K\norsubgrpeq G$, then
	$$
	\commutator{K,G}={\left(\prod_{j=1}^m{\commutator{K,g_j}}\right)}^{\ast e},
	$$
	where $e$ only depends on $m$.
\end{theorem}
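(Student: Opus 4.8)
The plan is to establish the factorization by reducing to two extreme situations — a soluble normal subgroup and a semisimple one — and to treat each \emph{wholesale}, rather than inducting one step at a time along a $G$-composition series of $K$. I would first fix the symmetric generators $g_1,\ldots,g_m$ and record that $\commutator{K,G}\norsubgrpeq G$, so every product set we build is automatically $G$-invariant and may be rearranged up to conjugation. The naive approach of picking a chief series $\trivgrp=K_0\norsubgrpeq\cdots\norsubgrpeq K_t=K$ and factoring $\commutator{K,G}$ layer by layer must be abandoned: each chief factor contributes a bounded number of terms, but $t$ is not bounded in $m$, so the final count would depend on $\card{G}$. Instead I would use a \emph{short} characteristic series of $K$, namely the soluble radical $R$, then the socle of $K/R$ (a product of non-abelian simple groups, since $K/R$ has trivial soluble radical), and finally the quotient of $K/R$ by that socle, which embeds in a product of outer automorphism groups and is therefore soluble by Schreier's conjecture. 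This gives only \emph{three} $G$-invariant layers — soluble, semisimple, soluble — so a bounded count per layer yields a bounded total, provided each layer's internal complexity can be absorbed uniformly.

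For the two soluble layers I would combine the collection identity with Segal's quantitative theorem. Since $G=G'\gensubgrp{g_1,\ldots,g_m}$ holds automatically, Lemma \ref{lem:com_gen_id_wrt_lcs} applied with $L\defeq K$ gives
$$
\commutator{K,G}=\commutator{K,g_1}\cdots\commutator{K,g_m}\commutator{K,_l G}
$$
for every $l\geq 1$: the first $m$ sets are already of the required form, and choosing $l$ large stabilizes the residual term. Working inside a soluble layer, a suitable instance of Theorem \ref{thm:com_gen_id_sol_grp} then factors the relevant $\lcssubgrp_\omega$-type kernel into boundedly many commutator sets. The decisive feature of that theorem is that its bound is \emph{independent of the derived length} of the soluble group, which is precisely what lets us absorb an unbounded internal stack of abelian chief factors in a single step rather than accumulating a bound over them.

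For the semisimple layer, write it as $\prod_i S_i$ with simple factors $S_i$ permuted by $G$. I would partition the $S_i$ into $G$-orbits and, within each orbit, use a generator acting transitively on it to realize a full diagonal's worth of commutators inside one factor via the set $\commutator{K,g_j}$. The key input is the CFSG-based bounded-width theorem of Liebeck and Shalev (Theorem 1.1 of \cite{liebeckshalev2001diameters}), which guarantees that a bounded product of these normal subsets already exhausts each $S_i$, with a bound depending only on $m$ and not on the individual simple group. This is the same uniform covering phenomenon that underlies the proof of Theorem \ref{thm:smpl_Fin_approx_grp_ultraprod_smpl_grps}, and it disposes of arbitrarily many, arbitrarily large simple factors simultaneously.

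To assemble the three layers I would lift the factorizations obtained in each quotient back to $K$, splitting and reabsorbing the resulting cross terms by means of the identity $\commutator{ab,c}=\commutator{a,c}^{b}\commutator{b,c}$ together with the normality of $\commutator{K,G}$. The hard part — where essentially all of the content lies — is the bookkeeping that keeps the final exponent $e$ a function of $m$ alone. The soluble estimate (via Theorem \ref{thm:com_gen_id_sol_grp}) and the semisimple estimate (via \cite{liebeckshalev2001diameters}) are each engineered so that their counts do not grow with the number of chief factors; the remaining difficulty is to lift a product-set factorization from a quotient without the error terms, produced by the lower layer, inflating the bound. Because there are only three layers to traverse, the composition of three bounded functions of $m$ stays bounded, but verifying that each lift can be controlled by the preceding layer — rather than paying a price proportional to $\card{G}$ — is the genuine obstacle and the technical heart of the argument.
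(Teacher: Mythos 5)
First, a point of order: the paper does not prove this statement at all --- it is quoted as Theorem 1.2 of Nikolov--Segal \cite{nikolovsegal2012generators}, whose proof occupies the bulk of that long paper. Your sketch follows the same broad philosophy (soluble pieces via Segal-type machinery, semisimple pieces via CFSG-based width bounds), but it rests on a structural claim that is false, so the proposed reduction cannot be carried out. The error is the ``three-layer'' decomposition. If $R$ is the soluble radical of $K$, then $(K/R)/\mathrm{soc}(K/R)$ embeds into $\mathrm{Out}(\mathrm{soc}(K/R))$, \emph{not} into a product $\prod_i \mathrm{Out}(S_i)$ over the simple factors: the outer automorphism group of a product of isomorphic non-abelian simple groups also contains the full permutation group of the factors, and that part is not covered by Schreier's conjecture. (The paper's Lemma \ref{lem:prod_alm_smpl_grps} can invoke Schreier only because there the almost simple groups $A_i$ are given as separate direct factors, so no permutation part arises.) Concretely, take $K=S\wr A_5=S^5\rtimes A_5$ with $S$ non-abelian simple: its soluble radical is trivial, $\mathrm{soc}(K)=S^5$, and $K/\mathrm{soc}(K)\cong A_5$ is not soluble. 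Iterated wreath products $S\wr A_5\wr A_5\wr\cdots$ have trivial soluble radical and arbitrarily many semisimple layers, so a general finite $K$ admits no decomposition into boundedly many $G$-invariant soluble/semisimple layers, and the ``composition of three bounded functions of $m$'' argument collapses. Worse, in such towers the generators $g_j$ permute the simple factors of each layer, so the sets $\commutator{K,g_j}$ mix factors, and Liebeck--Shalev \cite{liebeckshalev2001diameters} applied inside a single factor (as in the proof of Theorem \ref{thm:smpl_Fin_approx_grp_ultraprod_smpl_grps}, where each simple factor \emph{is} normal in the ambient group) no longer applies directly; handling this is what forces the twisted-commutator analysis of \cite{nikolovsegal2012generators}.

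Two further gaps would remain even if the layers could be bounded. Theorem \ref{thm:com_gen_id_sol_grp} concerns a finite \emph{soluble} group $G$ and factors $\lcssubgrp_\omega(G)$; it cannot be invoked for a soluble layer sitting inside a non-soluble $G$, with commutators taken against generators of all of $G$ --- extending Segal's soluble machinery to that setting is itself a substantial portion of \cite{nikolovsegal2012generators}. And the assembly step you explicitly defer is not bookkeeping: lifting a factorization from $G/N$ (with $N\norsubgrpeq G$, $N\subgrpeq K$) leaves an error term inside $\commutator{K,G}\setmeet N$, which modulo $\commutator{N,G}$ is a \emph{central} subgroup of $G/\commutator{N,G}$, and bounded products of the sets $\commutator{K,g_j}$ have no evident way of reaching such central residues. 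Controlling them uniformly, by an induction designed so that bounds do not accumulate along an (unboundedly long) chief series, is the technical heart of the Nikolov--Segal proof. So your proposal identifies the right ingredients, but neither the reduction nor the two key steps it relies on are correct or complete as stated.
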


\begin{remark}
It was remarked in \cite{glebskyrivera2008sofic} that it was an open problem at the time of writing to decide whether a finite product of conjugacy classes in a non-abelian free group is always closed in the profinite topology. 

It is a rather straightforward consequence of Theorem \ref{thm:gen_com_fin_grp} that this is not the case. Indeed, the theorem implies that in $\freegrp=\grp{x_1,\ldots,x_m}$ the profinite closure of the product of the $2me$ conjugacy classes of $x_1^{-1},x_1,\ldots,x_m^{-1},x_m$ contains the entire commutator subgroup, but it is a well known fact (see Theorem 3.1.2 of \cite{segal2009words}) that the commutator width in this group is infinite if $m>1$. 

This implication was first observed by Segal and independently discovered by Gismatullin.
\end{remark}

Actually, we shall use the following immediate corollary of Theorem \ref{thm:gen_com_fin_grp}:

\begin{corollary}\label{cor:two_gen_com_fin_grp}
	Let $G$ be a quotient of a product of finite groups, then for $g,h\in G$ and $N\in\nats$ we have
	$$
	\commutator{g^N,h^N}\in {\left(\commutator{G,g}\commutator{G,g^{-1}}\commutator{G,h}\commutator{G,h^{-1}}\right)}^{\ast e}
	$$
	for some fixed constant $e\in\nats$.
\end{corollary}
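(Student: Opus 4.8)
The plan is to derive the corollary from Theorem \ref{thm:gen_com_fin_grp} in two stages: first I would settle the case where $G$ is finite, and then I would transfer the conclusion to an arbitrary quotient of a product of finite groups by a coordinatewise argument. The feature of Theorem \ref{thm:gen_com_fin_grp} that makes this work is that the constant $e$ depends \emph{only} on the number $m$ of generators. Since in our application $m$ will be the fixed number $4$ (coming from $g^{\pm 1}$ and $h^{\pm 1}$), the constant $e$ becomes absolute, i.e.~uniform over every finite group that occurs.

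For the finite case, given $g,h\in G$ I would set $G_0\defeq\gensubgrp{g,h}\subgrpeq G$ and observe that $\set{g,g^{-1},h,h^{-1}}$ is a symmetric generating set of $G_0$ of size at most $4$. Applying Theorem \ref{thm:gen_com_fin_grp} to $G_0$ with $K\defeq G_0$ and this generating set ($m=4$) gives
$$
\commutator{G_0,G_0}=\left(\commutator{G_0,g}\commutator{G_0,g^{-1}}\commutator{G_0,h}\commutator{G_0,h^{-1}}\right)^{\ast e}
$$
for a constant $e$ depending only on $m=4$. Since $g^N,h^N\in G_0$, the element $\commutator{g^N,h^N}$ lies in $\commutator{G_0,G_0}$, and because $G_0\subgrpeq G$ yields $\commutator{G_0,x}\subseteq\commutator{G,x}$ for every $x$, I would conclude the desired containment with $e$ absolute. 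The possibility that two of the four generators coincide is harmless, since $1_{G_0}\in\commutator{G_0,x}$ for every $x$, so the four-fold product set only grows when padding to length $4$.

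For the general case, write $G=P/R$ with $P=\prod_{i\in I}G_i$ a product of finite groups and $R\norsubgrpeq P$, and lift $g,h$ to $\tilde g=(g_i)_i$ and $\tilde h=(h_i)_i$ in $P$. The finite case, applied in each $G_i$ with the \emph{same} $e$, lets me write for every $i$
$$
\commutator{g_i^N,h_i^N}=\prod_{j=1}^{e}\commutator{a_{ij},g_i}\commutator{b_{ij},g_i^{-1}}\commutator{c_{ij},h_i}\commutator{d_{ij},h_i^{-1}}
$$
for suitable $a_{ij},b_{ij},c_{ij},d_{ij}\in G_i$. Assembling these witnesses coordinatewise into $a_j\defeq(a_{ij})_i$ and likewise $b_j,c_j,d_j$ in $P$, the identity holds in every coordinate simultaneously, so $\commutator{\tilde g^N,\tilde h^N}$ is the $e$-fold product of the blocks $\commutator{a_j,\tilde g}\commutator{b_j,\tilde g^{-1}}\commutator{c_j,\tilde h}\commutator{d_j,\tilde h^{-1}}$ in $P$. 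Finally I would push this through the quotient map $\pi:P\to G$, which sends $\commutator{\tilde g^N,\tilde h^N}$ to $\commutator{g^N,h^N}$ and each $\commutator{a_j,\tilde g}$ into $\commutator{G,g}$, and so on, delivering the claimed membership.

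The proof is essentially bookkeeping once Theorem \ref{thm:gen_com_fin_grp} is available, and the single point that genuinely requires care is the uniformity of $e$. It is precisely the independence of $e$ from the particular finite group that allows the coordinatewise decompositions in the factors $G_i$ to be combined into one decomposition of \emph{bounded} length in $P$, rather than one whose length would blow up with $i$ and fail to survive the passage to the quotient. The remaining steps — restricting to $\gensubgrp{g,h}$, enlarging $\commutator{G_0,x}$ to $\commutator{G,x}$, and transporting the product decomposition along the homomorphism $\pi$ — are routine.
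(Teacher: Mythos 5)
Your proposal is correct and follows essentially the route the paper intends: the paper presents the corollary as an immediate consequence of Theorem \ref{thm:gen_com_fin_grp}, and the standard derivation is exactly yours --- apply that theorem with $K\defeq G_0\defeq\gensubgrp{g,h}$ and the symmetric generating set $\set{g^{\pm 1},h^{\pm 1}}$ (so $m=4$ and $e$ is absolute), then pass to products of finite groups by assembling witnesses coordinatewise and to quotients by lifting $g,h$ and projecting the decomposition. Your emphasis on the uniformity of $e$, and the padding remark for the case of coinciding generators, address the only points where any care is needed.
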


Recall that $\catFin$ denotes the class of all finite groups. At first we prove the following theorem:

\begin{theorem}\label{thm:ctd_Fin_approx_Lie_grps_ab}
	A connected Lie group is $\catFin$-approximable as a topological group if and only if it is abelian.
\end{theorem}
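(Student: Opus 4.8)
The plan is to prove the two implications separately, with essentially all the content in the forward direction.

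For the (easy) converse, I would use that a connected abelian Lie group is isomorphic to $\reals^m\times(\reals/\ints)^n$. Lemma~\ref{lem:ctd_ab_Lie_grp_Ab_approx} already shows such a group is $\catAb_d$-approximable for $d=m+n$, and since $\catAb_d$-groups are finite, the approximating ultraproduct is an ultraproduct of $\catFin$-groups; hence the group is $\catFin$-approximable. For the forward direction, suppose $L$ is a connected Lie group embedding continuously into a metric ultraproduct $(\nathom H,\nathom\ell)=\prod_\calU(H_i,\ell_i)$ of finite groups, and write $\iota\colon L\into\nathom H$ for this embedding. The structural point making everything work is that $\nathom H$, being the quotient of $\prod_{i}H_i$ by the null subgroup $N_\calU$, is a quotient of a product of finite groups, so Corollary~\ref{cor:two_gen_com_fin_grp} applies to it with a \emph{fixed} constant $e$, independent of the chosen elements and of $N$. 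My strategy is to show that any two elements $g,h$ in a small neighborhood $U$ of $1_L$ commute; since $L$ is connected it is generated by $U$, and a group generated by pairwise-commuting elements is abelian, so this suffices.

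To show $g,h\in U$ commute I would exploit divisibility of $L$ near the identity. Choosing $U$ so that $\exp$ is a diffeomorphism onto it, write $g=\exp(X)$, $h=\exp(Y)$ and set $g_N\defeq\exp(X/N)$, $h_N\defeq\exp(Y/N)$, so that $g_N^N=g$, $h_N^N=h$ and $g_N,h_N\to 1_L$ as $N\to\infty$. Since $\iota$ is a homomorphism, $\iota(g_N)^N=\iota(g)$ and $\iota(h_N)^N=\iota(h)$, and applying Corollary~\ref{cor:two_gen_com_fin_grp} in $\nathom H$ to $\iota(g_N),\iota(h_N)$ with exponent $N$ gives
\[
\commutator{\iota(g),\iota(h)}=\commutator{\iota(g_N)^N,\iota(h_N)^N}\in\left(\commutator{\nathom H,\iota(g_N)}\commutator{\nathom H,\iota(g_N)^{-1}}\commutator{\nathom H,\iota(h_N)}\commutator{\nathom H,\iota(h_N)^{-1}}\right)^{\ast e},
\]
where crucially the number $4e$ of factors does not depend on $N$.

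Now I would estimate lengths. Because $\nathom\ell$ is invariant, every element of $\commutator{\nathom H,a}$ has length at most $2\nathom\ell(a)$, so the displayed membership yields $\nathom\ell(\commutator{\iota(g),\iota(h)})\le 8e\max\{\nathom\ell(\iota(g_N)),\nathom\ell(\iota(h_N))\}$. Continuity of $\iota$ forces $\nathom\ell(\iota(g_N)),\nathom\ell(\iota(h_N))\to 0$ as $g_N,h_N\to 1_L$, whence $\nathom\ell(\commutator{\iota(g),\iota(h)})=0$; as $\nathom\ell$ is a genuine length function on the ultraproduct this gives $\commutator{\iota(g),\iota(h)}=1$, and injectivity of $\iota$ gives $\commutator{g,h}=1$. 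Thus $U$ consists of pairwise-commuting elements and $L=\gensubgrp U$ is abelian.

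The main obstacle, and the entire point of the argument, is that the constant $e$ must be independent of $N$: this is exactly what Corollary~\ref{cor:two_gen_com_fin_grp} (via Theorem~\ref{thm:gen_com_fin_grp}) supplies, and it is what allows a fixed number of commutator factors to be \enquote*{diluted} by taking $N$-th roots tending to the identity. The remaining ingredients — existence of roots via $\exp$, invariance of $\nathom\ell$ in the length estimate, and that a connected Lie group generated by a pairwise-commuting neighborhood is abelian — are routine, so I expect no difficulty there.
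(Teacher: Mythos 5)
Your proposal is correct and follows essentially the same route as the paper: the paper merely packages the key step as a separate lemma (any two continuous one-parameter subgroups $\reals\to\prod_\calU(H_i,\ell_i)$ have commuting images), but the core argument --- taking $N$-th roots via $\exp$, applying Corollary~\ref{cor:two_gen_com_fin_grp} with the constant $e$ independent of $N$, and using invariance of $\nathom{\ell}$ with the triangle inequality to get the bound $8e\varepsilon$ --- is identical, as is the appeal to Lemma~\ref{lem:ctd_ab_Lie_grp_Ab_approx} for the converse.
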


By Lemma \ref{lem:ctd_ab_Lie_grp_Ab_approx} we already know that connected abelian Lie groups are $\catFin$-approximable. So we are only left to prove that a $\catFin$-approximable connected Lie group is actually abelian. This will be a consequence of the following auxiliary result:

\begin{lemma}\label{lem:two_one_par_subgrps_to_ultprod_comm}
	Let $\varphi,\psi:\reals\to(\nathom{H},\nathom{\ell})=\prod_{\calU}{(H_i,\ell_i)}$ be continuous homomorphisms into a metric ultraproduct of finite groups $H_i$ with invariant length function $\ell_i$ ($i\in I$). Then for all $s,t\in\reals$ it holds that $\commutator{\varphi(s),\psi(t)}=1_{\nathom{H}}$.
\end{lemma}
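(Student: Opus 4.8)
The plan is to show that the images of two one-parameter subgroups commute by exploiting the divisibility of $\reals$ together with Corollary \ref{cor:two_gen_com_fin_grp}, which bounds a commutator of $N$th powers by a product of a fixed number of commutator-classes. The key point is that $\reals$ is uniquely divisible, so for any $s,t\in\reals$ and any $N\in\nats$ we may write $\varphi(s)=\varphi(s/N)^N$ and $\psi(t)=\psi(t/N)^N$. Since $\nathom{H}$ is a quotient of the product $\prod_{i\in I}H_i$ of finite groups, Corollary \ref{cor:two_gen_com_fin_grp} applies to $g\defeq\varphi(s/N)$ and $h\defeq\psi(t/N)$, giving a fixed $e\in\nats$ (independent of $N$) with
$$
\commutator{\varphi(s),\psi(t)}=\commutator{g^N,h^N}\in\lrcommutator{\commutator{\nathom{H},g}\commutator{\nathom{H},g^{-1}}\commutator{\nathom{H},h}\commutator{\nathom{H},h^{-1}}}^{\ast e}.
$$

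The next step is to estimate $\nathom{\ell}$ of the right-hand side. Using invariance of $\nathom{\ell}$ and the triangle inequality, any element of $\commutator{\nathom{H},g}$ has the form $k^{-1}g^{-1}k\cdot g$ for some $k\in\nathom{H}$, hence length at most $2\nathom{\ell}(g)=2\nathom{\ell}(\varphi(s/N))$; the same bound holds for the other three factors with $g$ replaced by $g^{-1},h,h^{-1}$, and length is inversion-invariant. Multiplying out the $e$-fold product of four such factors, the subadditivity of $\nathom{\ell}$ yields
$$
\nathom{\ell}\lrcommutator{\varphi(s),\psi(t)}\leq 8e\max\lrset{\nathom{\ell}(\varphi(s/N)),\nathom{\ell}(\psi(t/N))}.
$$
Now continuity of $\varphi$ and $\psi$ at $0$ forces $\nathom{\ell}(\varphi(s/N))\to 0$ and $\nathom{\ell}(\psi(t/N))\to 0$ as $N\to\infty$, since $s/N,t/N\to 0$ in $\reals$ and $\varphi,\psi$ are continuous homomorphisms with $\varphi(0)=\psi(0)=1_{\nathom{H}}$. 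As $e$ is a fixed constant independent of $N$, letting $N\to\infty$ gives $\nathom{\ell}\lrcommutator{\varphi(s),\psi(t)}=0$, which means $\commutator{\varphi(s),\psi(t)}=1_{\nathom{H}}$ because $\nathom{\ell}$ is a genuine length function on the metric ultraproduct.

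The main obstacle, and the reason the statement is nontrivial, is precisely that the constant $e$ must be uniform in $N$: a naive bound on a commutator of $N$th powers would blow up with $N$, whereas Corollary \ref{cor:two_gen_com_fin_grp} (ultimately Theorem \ref{thm:gen_com_fin_grp} of \cite{nikolovsegal2012generators}) provides a bound whose combinatorial length $e$ depends only on the number of generators and not on $N$. I would also take care to justify that Corollary \ref{cor:two_gen_com_fin_grp} is genuinely applicable to $\nathom{H}$, which holds because a metric ultraproduct of finite groups is a quotient of their direct product, as noted after Definition \ref{def:met_ultraprod_fin_grps}; the commutator-class identities and the passage to the $\calU$-limit are then routine.
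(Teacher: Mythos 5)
Your proof is correct and follows essentially the same route as the paper's own argument: both apply Corollary \ref{cor:two_gen_com_fin_grp} to $g=\varphi(s/N)$, $h=\psi(t/N)$, use invariance of $\nathom{\ell}$ and the triangle inequality to bound $\nathom{\ell}\lrcommutator{\varphi(s),\psi(t)}$ by $8e\varepsilon$ with $e$ uniform in $N$, and conclude since $\varepsilon$ (equivalently, $N$) is arbitrary. The only cosmetic difference is that you phrase the conclusion as a limit $N\to\infty$, while the paper fixes $\varepsilon>0$ and chooses $N$ by continuity; the content is identical.
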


Let us first prove Theorem \ref{thm:ctd_Fin_approx_Lie_grps_ab} using Lemma \ref{lem:two_one_par_subgrps_to_ultprod_comm}.

\begin{proof}[Proof of Theorem \ref{thm:ctd_Fin_approx_Lie_grps_ab}]
	Assume $L$ is a connected $\catFin$-approximable Lie group. Then there is an embedding $\iota:L\into(\nathom{H},\nathom{\ell})=\prod_\calU{(H_i,\ell_i)}$ into a metric ultraproduct of finite groups with invariant length function. If $a,b\in L$ are in the image of the exponential map, Lemma \ref{lem:two_one_par_subgrps_to_ultprod_comm} implies that $\iota(a)$ and $\iota(b)$ commute. So as $\iota$ is injective, $a$ and $b$ commute. Hence by connectedness $L=L^0$ is abelian. This ends the proof.
\end{proof}

We are still left to prove Lemma \ref{lem:two_one_par_subgrps_to_ultprod_comm}:

\begin{proof}[Proof of Lemma \ref{lem:two_one_par_subgrps_to_ultprod_comm}]
	For $\varepsilon>0$ by continuity we can choose $N\in\nats_{>0}$ large enough such that $\nathom{\ell}(\varphi(s/N)),\nathom{\ell}(\psi(t/N))<\varepsilon$.
	Set $G\defeq\nathom{H}$, $g\defeq\varphi(s/N)$ and $h\defeq\psi(t/N)$ and apply Corollary \ref{cor:two_gen_com_fin_grp}. This gives 
	$$
	\commutator{\varphi(s),\psi(t)}=\commutator{g^N,h^N}\in{\left(\commutator{\nathom{H},g}\commutator{\nathom{H},g^{-1}}\commutator{\nathom{H},h}\commutator{\nathom{H},h^{-1}}\right)}^{\ast e},
	$$
	whence $\nathom{\ell}(\commutator{\varphi(s),\psi(t)})<8e\varepsilon$ by invariance of $\nathom{\ell}$ and the triangle inequality. Since $\varepsilon>0$ was arbitrary, the proof is complete.
\end{proof}

Note that Theorem \ref{thm:ctd_Fin_approx_Lie_grps_ab} provides an answer to Question 2.11 of Doucha \cite{doucha2016metric} whether there are groups with invariant length function that do not embed in a metric ultraproduct of finite groups with invariant length function.
Since every compact Lie group can be equipped with an invariant length function that generates its topology, every such group with non-abelian identity component is an example of such a group by the previous theorem. (Indeed, Theorem \ref{thm:ctd_Fin_approx_Lie_grps_ab} does even provide topological types of groups which cannot occur as subgroups of such a metric ultraproduct.)

Before we continue with our next result, let us state the following two remarks.

\begin{remark}\label{rmk:Lie_grp_not_Fin_approx_top_matters}
	In Theorem \ref{thm:ctd_Fin_approx_Lie_grps_ab} the topology of the Lie group matters. Indeed, any linear Lie group is $\catFin$-approximable as an abstract group by Remark \ref{rmk:C_approx_iff_fin_gen_subgrp_C_approx}, since all its finitely generated subgroups are residually finite by Malcev's Theorem and hence $\catFin$-approximable by Remark \ref{rmk:abs_res_C_grps_C_approx}.
	
	Thus any linear Lie group $L$ is embeddable (as an abstract group) into a metric ultraproduct of finite groups with invariant length function indexed over, say, the partially ordered set of pairs consisting of a finite subset of $L$ and a positive rational number. We will now show that we can even choose this index set to be $\nats$. Namely, if $L \subseteq \SL_n(\complex)$, then $L$ can be embedded into the algebraic ultraproduct $\prod_\calU \SL_n(p^{p!})$, where $\calU$ is a non-principal ultrafilter on the set of prime numbers. Indeed, this ultraproduct is isomorphic to $\SL_n(k)$, where $k= \prod_{\calU} \GF(p^{p!})$ is a pseudofinite field. Now it is straightforward to see that $k$ contains the field $k_0 = \prod_{\calU} \GF(p)$ together with its algebraic closure $k':= k_0^{\rm alg}$. However, $k'$ is an algebraically closed field of characteristic zero and cardinality $2^{\aleph_0}$ (a result due to  Shelah \cite{shelah1970cardinality}) and hence isomorphic to $\complex$. Note that, if we view the above algebraic ultraproduct as a metric ultraproduct, the induced topology on $\SL_n(\complex)$ is discrete.
	
	Since some non-linear Lie groups admit finitely presented subgroups which are not residually finite \cite{deligne1978extensions}, it is clear that such embeddings cannot exist without the assumption of linearity.
\end{remark}

\begin{remark}\label{rmk:reals_not_Sym_approx}
	When one approximates with symmetric groups, one can not even embed the real line $\reals$ in a metric ultraproduct of such groups with invariant length function. E.g.~for the symmetric group $S_n$ it can be shown that all invariant length functions $\ell$ on it satisfy $\ell(\sigma^k)\leq 3\ell(\sigma)$, for every $k\in\ints$ and $\sigma\in S_n$. Using this identity, it is simple to deduce that the only continuous homomorphism of $\reals$ into a metric ultraproduct of finite symmetric groups  with invariant length function is trivial.
\end{remark}

Referring to the question of Zilber \cite[p.~17]{zilber2014perfect} (also Question 1.1 of Pillay \cite{pillay2015remarks}) whether a compact simple Lie group can be a quotient of the algebraic ultraproduct of finite groups, we present the following second application of Corollary \ref{cor:two_gen_com_fin_grp}:

\begin{theorem}\label{thm:Lie_grp_quot_prod_fin_grps}
	A Lie group equipped with an bi-invariant metric generating its topology that is an abstract quotient of a product of finite groups has abelian identity component.
\end{theorem}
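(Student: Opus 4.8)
The plan is to follow the proof of Theorem \ref{thm:ctd_Fin_approx_Lie_grps_ab}: I will show that any two elements in the image of the exponential map of $L$ commute, and then invoke connectedness. The key simplification compared with Lemma \ref{lem:two_one_par_subgrps_to_ultprod_comm} is that I may apply Corollary \ref{cor:two_gen_com_fin_grp} directly to $G \defeq L$, because $L$ is by hypothesis an abstract quotient of a product of finite groups; no ultraproduct is needed. Write $\ell$ for the invariant length function of the given bi-invariant metric, which generates the topology of $L$. Fix $X, Y$ in the Lie algebra of $L$, put $a \defeq \exp(X)$, $b \defeq \exp(Y)$, and for $N \in \nats_{>0}$ set $g_N \defeq \exp(X/N)$, $h_N \defeq \exp(Y/N)$, so that $a = g_N^N$ and $b = h_N^N$. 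Corollary \ref{cor:two_gen_com_fin_grp} then provides a constant $e \in \nats$, independent of $N$, with
$$
\commutator{a, b} = \commutator{g_N^N, h_N^N} \in \left(\commutator{L, g_N}\commutator{L, g_N^{-1}}\commutator{L, h_N}\commutator{L, h_N^{-1}}\right)^{\ast e}.
$$

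The next step is a length estimate. By invariance and the triangle inequality, $\ell(\commutator{x, g_N}) = \ell(x^{-1}g_N^{-1}x \cdot g_N) \le \ell(x^{-1}g_N^{-1}x) + \ell(g_N) = 2\ell(g_N)$ for every $x \in L$, and similarly every element of $\commutator{L, g_N^{-1}}$, $\commutator{L, h_N}$, $\commutator{L, h_N^{-1}}$ has length at most $2\ell(g_N)$, $2\ell(h_N)$, $2\ell(h_N)$ respectively. Hence the displayed product set consists of elements of length at most $4e\left(\ell(g_N) + \ell(h_N)\right)$, so
$$
\ell(\commutator{a,b}) \le 4e\left(\ell(g_N) + \ell(h_N)\right).
$$
Since $\exp$ is continuous and $\ell$ generates the topology, $\ell(g_N) = \ell(\exp(X/N)) \to 0$ and $\ell(h_N) \to 0$ as $N \to \infty$. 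The left-hand side is independent of $N$, so it must vanish; that is, $\commutator{a, b} = 1_L$.

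It remains to pass from exponentials to the identity component. Because $L^0$ is a connected Lie group, the image of the exponential map contains an open neighbourhood of $1_L$ and therefore generates $L^0$. As any two exponentials commute by the above, the subgroup they generate, namely all of $L^0$, is abelian. The single point that genuinely has to hold is that the constant $e$ of Corollary \ref{cor:two_gen_com_fin_grp} does not depend on $N$ (nor on the particular elements $g_N, h_N$): this is precisely what licences squeezing the fixed element $\commutator{a,b}$ between $0$ and a bound tending to $0$, and it follows from the statement of the corollary, itself a consequence of the theorem of the first author and Segal (Theorem \ref{thm:gen_com_fin_grp}), where the number of factors depends only on the size of the generating set.
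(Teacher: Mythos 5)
Your proof is correct and takes essentially the same route as the paper's: both apply Corollary \ref{cor:two_gen_com_fin_grp} directly to $G\defeq L$, take $N$-th roots of $a,b$ of small length via the exponential map, bound $\ell_L(\commutator{a,b})$ by invariance and the triangle inequality (your $4e(\ell(g_N)+\ell(h_N))$ is the paper's $8e\varepsilon$), and conclude because the image of $\exp$ generates $L^0$. The only cosmetic difference is that you realize the roots explicitly as $\exp(X/N)$, $\exp(Y/N)$, where the paper simply chooses $g,h$ with $g^N=a$, $h^N=b$ and $\ell_L(g),\ell_L(h)<\varepsilon$.
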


The proof of this result is almost identical to the proof of Theorem \ref{thm:ctd_Fin_approx_Lie_grps_ab}.

\begin{proof}
	Let $(L,\ell_L)$ be such a Lie group with invariant length function and $a,b\in L$ be in the image of the exponential map. For $\varepsilon>0$ we find $N\in\nats_{>0}$, $g,h\in L$ such that $\ell_L(g),\ell_L(h)<\varepsilon$ and $g^N=a$, $h^N=b$. Then applying Corollary \ref{cor:two_gen_com_fin_grp} to $G\defeq L$ yields 
	$$
	\commutator{a,b}=\commutator{g^N,h^N}\in{\left(\commutator{L,g}\commutator{L,g^{-1}}\commutator{L,h}\commutator{L,h^{-1}}\right)}^{\ast e},
	$$
	whence $\ell_L(\commutator{a,b})<8e\varepsilon$ by the invariance of $\ell_L$ and the triangle inequality. This shows that $a$ and $b$ commute. Hence, as $L^0$ is generated by the image of the exponential map, it must be abelian.
\end{proof}

Theorem \ref{thm:Lie_grp_quot_prod_fin_grps} implies that any compact simple Lie group, the simplest example being $\SO_3(\reals)$, is not a quotient of a product of finite groups, answering Zilber's question (and hence also answers Question 1.1 of Pillay \cite{pillay2015remarks}).

Moreover, Theorem \ref{thm:Lie_grp_quot_prod_fin_grps} remains valid if we replace the product of finite groups by a \newnotion{pseudofinite group}, i.e.~a group which is a model of the theory of all finite groups.
It then also provides a negative answer to Question 1.2 of Pillay \cite{pillay2015remarks}, whether there is a surjective homomorphism from a pseudofinite group to a compact simple Lie group.

Before we state the last theorem of this section, we digress briefly by pointing out a further application of Theorems \ref{thm:ctd_Fin_approx_Lie_grps_ab} and \ref{thm:Lie_grp_quot_prod_fin_grps}. 

Referring to \cite{turing1938finite}, we call a compact group $G$ with compatible invariant length function $\ell_G$ Turing-approximable if for all $\varepsilon>0$ there is a finite set $S_{\varepsilon}$, a group $H_\varepsilon$, and a bijection $\gamma_{\varepsilon} \colon S_{\varepsilon}\to H_\varepsilon$ such that for all $g\in G$ there is $s\in S_\varepsilon$ with $d_G(g,s)<\varepsilon$ and $d_G(gh,\gamma_{\varepsilon}^{-1}(\gamma_{\varepsilon}(g)\gamma_{\varepsilon}(h)))
<\varepsilon$ for $g,h\in S_\varepsilon$.
Define for $g\in H_\varepsilon$
$$
\ell_{\varepsilon}(g)\defeq \card{H_\varepsilon}^{-2} \sum_{f,h\in H_\varepsilon}{d_G(\gamma_{\varepsilon}^{-1}(fgh),\gamma_{\varepsilon}^{-1}(fh))}.
$$
It is routine to check that $\ell_{\varepsilon}$ is an invariant length function on $H_\varepsilon$ and that for all $g\in H_\varepsilon$ we have
$$
\modulus{\ell_{\varepsilon}(g)-\ell_G(\gamma_{\varepsilon}^{-1}(g))}<3\varepsilon.
$$
Set $\delta_\varepsilon:G\to S_\varepsilon$ such that $d_G(\delta_\varepsilon(g),g)$ is minimal for all $g\in G$.

In this situation we can apply Lemma \ref{lem:met_grp_iso_ultprod_fin_met_grps}, setting $I\defeq\nats$, $\calU$ to be a non-principal ultrafilter on $\nats$, $K_i\defeq H_{1/i}$, and $\varphi_i\defeq\gamma_{1/i}\compose\delta_{1/i}$. Again one checks easily that we may apply (i) and (ii) of the lemma. Hence a Turing-approximable group is isomorphic to a metric ultraproduct of finite groups with invariant length function. Thus Theorem \ref{thm:ctd_Fin_approx_Lie_grps_ab} as well as Theorem \ref{thm:Lie_grp_quot_prod_fin_grps} imply that a Turing-approximable Lie group has abelian identity component. This is the main result of \cite{turing1938finite}. By Lemma 3.4 of \cite{gelander2012limits} the latter condition is also sufficient for a compact Lie group to be
Turing-approximable.

Let us now turn to pseudofinite groups. By a \newnotion{compactification} of an abstract group $G$, we mean a compact group $C$ together with a homomorphism $\iota:G\to C$ with dense image.
Pilay conjectured that the Bohr compactification (i.e.~the universal compactification) of a pseudofinite group has abelian identity component (Conjecture 1.7 in \cite{pillay2015remarks}).
We answer this conjecture in the affirmative by the following result:

\begin{theorem}\label{thm:cpt_pseudo_fin_grp_ab}
	Let $G$ be a pseudofinite group. Then the identity component of any compactification $C$ of $G$ is abelian.
\end{theorem}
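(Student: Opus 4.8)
The plan is to transport the bounded commutator identity of Corollary~\ref{cor:two_gen_com_fin_grp} from the pseudofinite group $G$ onto the compactification $C$ by a first-order/compactness argument, and then to run the length estimate from the proof of Theorem~\ref{thm:Lie_grp_quot_prod_fin_grps}. Throughout, $\iota:G\to C$ denotes the dense homomorphism.

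First I would observe that, by Corollary~\ref{cor:two_gen_com_fin_grp} applied to finite groups (each of which is a quotient of itself), there is an \emph{absolute} constant $e\in\nats$ with the following property: for every finite group, every fixed $N\in\nats$, and all $g,h$ there exist $a_1,\dots,a_{4e}$ such that
$$
\commutator{g^N,h^N}=\prod_{j=1}^{e}\commutator{a_{4j-3},g}\,\commutator{a_{4j-2},g^{-1}}\,\commutator{a_{4j-1},h}\,\commutator{a_{4j},h^{-1}}.
$$
For each fixed $N$ this is a single first-order sentence $P_N$ in the language of groups, since $g^N,h^N$ are group words and the $a_j$ are existentially quantified. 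As $P_N$ holds in every finite group and $G$ is pseudofinite, $P_N$ holds in $G$.

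Next I would transfer each $P_N$ from $\iota(G)$ to all of $C$. Given $u,v\in C$, choose nets with $\iota(g_\lambda)\to u$ and $\iota(h_\lambda)\to v$. The sentence $P_N$, valid in $G$, yields $a_{1,\lambda},\dots,a_{4e,\lambda}\in G$ with $\commutator{g_\lambda^N,h_\lambda^N}=\prod_j\commutator{a_{j,\lambda},s_{j,\lambda}}$, where $s_{j,\lambda}$ runs cyclically through $g_\lambda^{\pm1},h_\lambda^{\pm1}$. Applying $\iota$ and passing to a subnet along which each of the finitely many $\iota(a_{j,\lambda})$ converges in the compact group $C$, continuity of the group operations gives $\commutator{\iota(g_\lambda)^N,\iota(h_\lambda)^N}\to\commutator{u^N,v^N}$ and $\prod_j\commutator{\iota(a_{j,\lambda}),\iota(s_{j,\lambda})}\to\prod_j\commutator{c_j,s_j}$ with $s_j\in\set{u^{\pm1},v^{\pm1}}$. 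Hence $\commutator{u^N,v^N}\in(\commutator{C,u}\commutator{C,u^{-1}}\commutator{C,v}\commutator{C,v^{-1}})^{\ast e}$; that is, $P_N$ holds in $C$.

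Finally I would conclude as in Theorem~\ref{thm:Lie_grp_quot_prod_fin_grps}. Writing the compact group $C$ as an inverse limit of compact Lie quotients $C/K$ with $\bigsetmeet K=\set{1_C}$, each $C/K$ is again a compactification of $G$ (the image of $\iota$ stays dense), $P_N$ descends to it as a surjective image of $C$, and $C^0$ maps into $(C/K)^0$; so commutativity of every $(C/K)^0$ forces $\commutator{a,b}\in\bigsetmeet K=\set{1_C}$ for all $a,b\in C^0$, and it suffices to treat $C$ a compact Lie group with bi-invariant metric $\ell$ generating its topology. For $a=\exp(X)$, $b=\exp(Y)$ in the image of the exponential map and $\varepsilon>0$, choose $N$ so large that $g_0\defeq\exp(X/N)$ and $h_0\defeq\exp(Y/N)$ satisfy $\ell(g_0),\ell(h_0)<\varepsilon$, and note $g_0^N=a$, $h_0^N=b$. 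Applying $P_N$ in $C$ to $u\defeq g_0$, $v\defeq h_0$ writes $\commutator{a,b}$ as a product of $4e$ commutators $\commutator{c_j,s_j}$ with $s_j\in\set{g_0^{\pm1},h_0^{\pm1}}$; by invariance $\ell(\commutator{c_j,s_j})\leq 2\ell(s_j)<2\varepsilon$, so $\ell(\commutator{a,b})<8e\varepsilon$. As $\varepsilon$ was arbitrary, $a$ and $b$ commute, whence $C^0$, being generated by the image of the exponential map, is abelian.

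The main obstacle is the transfer step. One cannot simply approximate $a=g_0^N$ by $\iota(g)^N$ with $\iota(g)$ close to $g_0$, because raising to the large power $N$ amplifies the approximation error; this is exactly why $P_N$ must be secured as an identity holding for the genuine elements of $C$. The only feature making the passage from the dense subgroup $\iota(G)$ to $C$ succeed is the compactness of $C$, which supplies convergent subnets for the otherwise completely uncontrolled conjugating witnesses $a_{j,\lambda}$.
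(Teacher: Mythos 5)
Your proposal is correct and takes essentially the same route as the paper's proof: pseudofiniteness secures the identity of Corollary~\ref{cor:two_gen_com_fin_grp} in $G$, a compactness (net) argument transfers it to $C$, and the Peter--Weyl theorem reduces the claim to compact Lie quotients of $C$, on each of which the length estimate from the proof of Theorem~\ref{thm:Lie_grp_quot_prod_fin_grps} shows the identity component is abelian. The only cosmetic difference is that you phrase the Peter--Weyl reduction as an inverse limit of compact Lie quotients with trivial intersection of kernels, whereas the paper embeds $C$ into the product of the images of its irreducible unitary representations; these are the same reduction, and your explicit treatment of the net argument merely fills in what the paper calls an easy compactness argument.
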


The proof is again just an easy application of Corollary \ref{cor:two_gen_com_fin_grp}.

\begin{proof}
	As $G$ is pseudofinite	it satisfies the statement of Corollary \ref{cor:two_gen_com_fin_grp} (and so does its image in $C$). An easy compactness argument shows that $C$ has the same property.
	Now let $\varrho_i:C\to\Aut(V_i)$ be the irreducible unitary representations of $C$ and $L_i$ the image of $\varrho_i$ ($i\in I$). By the Peter--Weyl Theorem, $C$ embeds continuously into $\prod_{i\in I}{L_i}$, and so $C^0$ embeds into $\prod_{i\in I}{L_i^0}$.
	
	But as $L_i$ is a compact quotient of $C$, Corollary \ref{cor:two_gen_com_fin_grp} holds in it, and so as in the proof of Theorem \ref{thm:Lie_grp_quot_prod_fin_grps} it follows that $L_i^0$ is abelian ($i\in I$). But then $C^0$ must be abelian as well, from the above embedding.
\end{proof}

\section*{Acknowledgements}

The second and third author want to thank Alessandro Carderi for interesting discussions. The content of this paper is part of the PhD project of the second author. This research was supported by ERC Consolidator Grant No.~681207.

After we finished a first version of this article and circulated it among some experts, it was pointed out that (independently and slightly earlier) Lev Glebsky found a solution to Zilber's problem along the same lines.

\begin{bibdiv}
\begin{biblist}
\bib{arzhantsevapaunescu2017linear}{article}{
	title={Linear sofic groups and algebras},
	author={Arzhantseva, Goulnara},
	author={P{\u{a}}unescu, Liviu},
	journal={Transactions of the American Mathematical Society},
	volume={369},
	number={4},
	pages={2285--2310},
	year={2017}
}
\bib{cornulier2011sofic}{article}{
	title={A sofic group away from amenable groups},
	author={Cornulier, Yves},
	journal={Mathematische Annalen},
	volume={350},
	number={2},
	pages={269--275},
	year={2011},
	publisher={Springer}
}
\bib{deligne1978extensions}{article}{
	title={Extensions centrales non r{\'e}siduellement finies de groupes arithm{\'e}tiques},
	author={Deligne, Pierre},
	journal={CR Acad. Sci. Paris S{\'e}r. AB},
	volume={287},
	number={4},
	pages={A203--A208},
	year={1978}
}
\bib{doucha2016metric}{article}{
	title={Metric topological groups: their metric approximation and metric ultraproducts},
	author={Doucha, Michal},
	journal={arXiv:1601.07449},
	year={2016},
	status={to appear in Groups, Geometry, and Dynamics}
}
\bib{elekszabo2004sofic}{article}{
	title={Sofic groups and direct finiteness},
	author={Elek, G{\'a}bor},
	author={Szab{\'o}, Endre},
	journal={Journal of Algebra},
	volume={280},
	number={2},
	pages={426--434},
	year={2004},
	publisher={Elsevier}
}
\bib{elekszabo2005hyperlinearity}{article}{
	title={Hyperlinearity, essentially free actions and $L^2$-invariants. The sofic property},
	author={Elek, G{\'a}bor},
	author={Szab{\'o}, Endre},
	journal={Mathematische Annalen},
	volume={332},
	number={2},
	pages={421--441},
	year={2005},
	publisher={Springer}
}
\bib{gelander2012limits}{article}{
	title={Limits of finite homogeneous metric spaces},
	author={Gelander, Tsachik},
	journal={Enseign. Math. (2)},
	volume={59},
	year={2013},
	number={1-2},
	pages={195--206},
}
\bib{glebskyrivera2008sofic}{article}{
	title={Sofic groups and profinite topology on free groups},
	author={Glebsky, Lev},
	author={Rivera, Luis Manuel},
	journal={Journal of Algebra},
	volume={320},
	number={9},
	pages={3512--3518},
	year={2008},
	publisher={Elsevier}
}
\bib{glebsky2016approximations}{article}{
	title={Approximations of groups, characterizations of sofic groups, and equations over groups},
	author={Glebsky, Lev},
	journal={Journal of Algebra},
	year={2016},
	publisher={Elsevier}
}
\bib{goursat1889substitutions}{inproceedings}{
	title={Sur les substitutions orthogonales et les divisions r{\'e}guli{\`e}res de l'espace},
	author={Goursat, Edouard},
	booktitle={Annales scientifiques de l'{\'E}cole Normale Sup{\'e}rieure},
	volume={6},
	pages={9--102},
	year={1889},
	organization={Elsevier}
}
\bib{gromov1999endomorphisms}{article}{
	title={Endomorphisms of symbolic algebraic varieties},
	author={Gromov, Mikhael},
	journal={Journal of the European Mathematical Society},
	volume={1},
	number={2},
	pages={109--197},
	year={1999},
	publisher={Springer}
}
\bib{holtrees2016some}{article}{
	title={Some closure results for $\calC$-approximable groups},
	author={Holt, Derek F.},
	author={Rees, Sarah},
	journal={arXiv preprint arXiv:1601.01836},
	year={2016}
}
\bib{howie1984the}{article}{
	title={The $p$-adic topology on a free group: A counterexample},
	author={Howie, James},
	journal={Mathematische Zeitschrift},
	volume={187},
	number={1},
	pages={25--27},
	year={1984},
	publisher={Springer}
}
\bib{karnikolov2014non}{article}{
	title={A non-LEA sofic group},
	author={Kar, Aditi},
	author={Nikolov, Nikolay},
	journal={Proceedings-Mathematical Sciences},
	pages={1--5},
	year={2014},
	publisher={Springer}
}
\bib{klyachkothom2017new}{article}{
	title={New topological methods to solve equations over groups},
	author={Klyachko, Anton},
	author={Thom, Andreas},
	journal={Algebraic \& Geometric Topology},
	volume={17},
	number={1},
	pages={331--353},
	year={2017},
	publisher={Mathematical Sciences Publishers}
}
\bib{liebeckshalev2001diameters}{article}{
	title={Diameters of finite simple groups: sharp bounds and applications},
	author={Liebeck, Martin W.},
	author={Shalev, Aner},
	journal={Annals of mathematics},
	pages={383--406},
	year={2001},
	publisher={JSTOR}
}
\bib{mclain1954characteristically}{inproceedings}{
	title={A characteristically simple group},
	author={McLain, Dermot H.},
	booktitle={Mathematical Proceedings of the Cambridge Philosophical Society},
	volume={50},
	number={04},
	pages={641--642},
	year={1954},
	organization={Cambridge University Press}
}
\bib{nikolovsegal2012generators}{article}{
	title={Generators and commutators in finite groups; abstract quotients of compact groups},
	author={Nikolov, Nikolay},
	author={Segal, Dan},
	journal={Inventiones mathematicae},
	volume={190},
	number={3},
	pages={513--602},
	year={2012},
	publisher={Springer}
}
\bib{ouldhoucinepoint2013alternatives}{article}{
	title={Alternatives for pseudofinite groups},
	author={Ould Houcine, Abderezak},
	author={Point, Fran{\c{c}}oise},
	journal={Journal of Group Theory},
	volume={16},
	number={4},
	pages={461--495},
	year={2013}
}
\bib{pillay2015remarks}{article}{
	title={Remarks on compactifications of pseudofinite groups},
	author={Pillay, Anand},
	journal={Fundamenta Mathematicae},
	volume={236},
	number={2},
	year={2017},
	pages={193--200}
}
\bib{robinson1972finiteness}{book}{
	author={Robinson, Derek J. S.},
	title={Finiteness conditions and generalized soluble groups. Part 1+2},
	note={Ergebnisse der Mathematik und ihrer Grenzgebiete, Band 62+63},
	publisher={Springer-Verlag, New York-Berlin},
	date={1972},
	pages={xv+210},
}
\bib{segal2000closed}{article}{
	title={Closed subgroups of profinite groups},
	author={Segal, Dan},
	journal={Proceedings of the London Mathematical Society},
	volume={81},
	number={1},
	pages={29--54},
	year={2000},
	publisher={Oxford University Press}
}
\bib{segal2009words}{book}{
	title={Words: notes on verbal width in groups},
	author={Segal, Dan},
	volume={361},
	year={2009},
	publisher={Cambridge University Press}
}
\bib{shelah1970cardinality}{article}{
	title={On the cardinality of ultraproduct of finite sets},
	author={Shelah, Saharon},
	journal={The Journal of Symbolic Logic},
	volume={35},
	number={1},
	pages={83--84},
	year={1970},
	publisher={Cambridge University Press}
}
\bib{stolzthom2014lattice}{article}{
	title={On the lattice of normal subgroups in ultraproducts of compact simple groups},
	author={Stolz, Abel},
	author={Thom, Andreas},
	journal={Proceedings of the London Mathematical Society},
	volume={108},
	number={1},
	pages={73--102},
	year={2014},
	publisher={Oxford University Press}
}
\bib{thom2008examples}{article}{
   author={Thom, Andreas},
   title={Examples of hyperlinear groups without factorization property},
   journal={Groups, Geometry, and Dynamics},
   volume={4},
   date={2010},
   number={1},
   pages={195--208},
}
\bib{thom2012metric}{article}{
	title={About the metric approximation of Higman's group},
	author={Thom, Andreas},
	journal={Journal of Group Theory},
	volume={15},
	number={2},
	pages={301--310},
	year={2012}
}
\bib{thomwilson2014metric}{article}{
	title={Metric ultraproducts of finite simple groups},
	author={Thom, Andreas}
	author={Wilson, John S.},
	journal={Comptes Rendus Mathematique},
	volume={352},
	number={6},
	pages={463--466},
	year={2014},
	publisher={Elsevier}
}
\bib{thomwilson2016some}{article}{
	title={Some geometric properties of metric ultraproducts of finite simple groups},
	author={Thom, Andreas}, 
	author={Wilson, John S.},
	journal={arXiv preprint arXiv:1606.03863},
	year={2016}
}
\bib{turing1938finite}{article}{
	title={Finite approximations to Lie groups},
	author={Turing, Alan M.},
	journal={Annals of Mathematics},
	pages={105--111},
	year={1938},
	publisher={JSTOR}
}
\bib{wilson1998profinite}{book}{
	title={Profinite groups},
	author={Wilson, John S.},
	volume={19},
	year={1998},
	publisher={Clarendon Press}
}
\bib{zilber2014perfect}{article}{
	title={Perfect infinities and finite approximation},
	author={Zilber, Boris},
	journal={Infinity and Truth. IMS Lecture Notes Series},
	volume={25},
	year={2014}
}
\end{biblist}
\end{bibdiv} 
\end{document}